%
%
%
%
%
%
%

\documentclass[11pt,twoside]{amsart}
\usepackage{latexsym,amssymb,amsmath}
\usepackage{setspace}

\textwidth=16.00cm
\textheight=22.00cm
\topmargin=0.00cm
\oddsidemargin=0.00cm
\evensidemargin=0.00cm
\headheight=0cm
\headsep=1cm
\headsep=0.5cm
\numberwithin{equation}{section}
\hyphenation{semi-stable}
\setlength{\parskip}{3pt}

\newtheorem{theorem}{Theorem}[section]
\newtheorem{lemma}[theorem]{Lemma}
\newtheorem{proposition}[theorem]{Proposition}
\newtheorem{corollary}[theorem]{Corollary}

\theoremstyle{definition}
\newtheorem{definition}[theorem]{Definition}

\newtheorem{remark}[theorem]{Remark}
\newtheorem{example}[theorem]{Example}

\begin{document}


\title[VANISHING IDEALS OVER ODD CYCLES]{Vanishing Ideals of Affine sets Parameterized  by Odd Cycles}

\author{M. Eduardo Uribe-Paczka}
\address{
Departamento de
Matem\'aticas\\
Escuela Superior de F\'isica y Matem\'aticas\\
Instituto Polit\'ecnico Nacional\\
07300 Mexico City.
} 
\email{muribep1700@alumno.ipn.mx}

\thanks{The first author was supported by CONACyT.
The second author was partially supported by CONACyT
and SNI. The third author was partially supported by SNI}

\keywords{Toric set, Parameterized code, Hilbert function, Vanishing Ideal, Regularity index.}

\author{Eliseo Sarmiento}
\address{
Departamento de
Matem\'aticas\\
Escuela Superior de F\'isica y Matem\'aticas\\
Instituto Polit\'ecnico Nacional\\ 07300 Mexico City.
} 
\email{eliseo@esfm.ipn.mx}

\author{Carlos Renter\'ia M\'arquez}
\address{
Departamento de
Matem\'aticas\\
Escuela Superior de F\'isica y Matem\'aticas\\
Instituto Polit\'ecnico Nacional\\
07300 Mexico City.
} 
\email{renteri@esfm.ipn.mx}

\subjclass[2000]{Primary 13P25; Secondary 14G50, 14G15, 11T71, 94B27, 94B05.}

\begin{abstract} 

Let $K$ be a finite field. Let $X^{\ast}$ be a subset of
the affine space $K^{n}$, which is parameterized by odd cycles.  
In this paper we give an explicit Gr\"obner basis for the vanishing ideal, $\mathbf{I}(X^{\ast})$, 
of $X^{\ast}$. We give an explicit formula for the regularity of $\mathbf{I}(X^{\ast})$ and finally 
if $X^{\ast}$ is parameterized by an odd cycle of length $k$, we show that the Hilbert function 
of the vanishing ideal of $X^{\ast}$ can be written as linear combination of Hilbert functions of degenerate torus.        

\end{abstract}

\maketitle

\section{Introduction}

We introduce some basic notions from coding theory. Let $K=\mathbb{F}_q$ be the finite field with 
$q$ elements. We consider the $n$-dimensional vector space $\mathbb{F}_q^{n}$ whose elements are $n$-tuples 
$a=(a_{1},\ldots,a_{n})$ with $a_{i}\in{\mathbb{F}_q}$.

\medskip

A {\it linear code} $C$ over the alphabet $\mathbb{F}_{q}$ is a linear subspace of $\mathbb{F}_q^{n}$. The elements of $C$ are called 
codewords. We call $n$ the {\it length} of the code $C$ and $\dim_{\mathbb{F}_{q}}$ $C$ the {\it dimension} of the code $C$ as an 
$\mathbb{F}_{q}$-vector space. The {\it weight of} an element $a=(a_{1},\ldots,a_{n})\in{\mathbb{F}_q^{n}}$ is defined as
$w(a)=\left| \{ i \mid a_{i} \neq 0  \} \right|$. The {\it minimum distance} $\delta(C)$ of a code $C \neq 0$ is given by: 

\begin{center}
$\delta(C)=\min\{ w(a) \mid 0 \neq a \in{C} \}$.
\end{center}

Recall that the {\it projective space\/} of dimension $s-1$ over $K$, denoted by
$\mathbb{P}^{s-1}_{K}$, is the quotient space
$$(K^{s}\setminus\{0\})/\sim $$
where two points $\alpha$, $\beta$ in $K^{s}\setminus\{0\}$
are equivalent if $\alpha=\lambda{\beta}$ for some $\lambda\in K$. We
denote the
equivalence class of $\alpha$ by $[\alpha]$.
Let $S=K[t_1,\ldots,t_s]=\oplus_{d=0}^\infty S_d$ be a polynomial ring over the field $K$ with the standard grading. 
Let $\mathbb{Y}$ be a subset of $\mathbb{P}_{K}^{s-1}$, where $\mathbb{P}_{K}^{s-1}$ is a projective
space over the field $K$. Fix a degree $d\geq 1$. Let $P_1,\ldots,P_m$ be a set of
representatives for the points of $\mathbb{Y}$ with $m=|\mathbb{Y}|$. 
For each $i$ there is $f_i\in S_d$ such that $f_i(P_i)\neq 0$. Let $P_i=[(a_1,\ldots,a_s)]$, there is at
least one $j$ in $\{1,\ldots,s\}$ such that $a_j\neq 0$. Setting $f_i(t_1,\ldots,t_s)=t_j^d$ one has that $f_i\in S_d$ and
$f_i(P_i)\neq 0$. The {\it evaluation map}, denoted by ${\rm ev}_d$, is defined as:    
 
\begin{equation}
\label{pro-ev-map}
{\rm ev}_d\colon S_d=K[t_1,\ldots,t_s]_d\rightarrow K^{|\mathbb{Y}|},\ \ \ \ \ 
f\mapsto
\left(\frac{f(P_1)}{f_1(P_1)},\ldots,\frac{f(P_m)}{f_m(P_m)}\right).
\end{equation}

The map ${\rm ev}_d$ is well-defined, i.e., it is independent of
the set of representatives that we choose for the points of
$\mathbb{Y}$. The map ${\rm ev}_d$ defines a linear map of $K$-vector spaces. 
The image of $S_d$ under ${\rm ev}_d$, denoted by  $C_\mathbb{Y}(d)$, is called a {\it projective Reed-Muller-type code\/} 
of degree $d$ over $\mathbb{Y}$ \cite{duursma-renteria-tapia,GRT}. It is also called an {\it evaluation code\/} associated to $\mathbb{Y}$.

\medskip

Let $Y$ be a subset of $K^{s}$, and let $\mathbb{Y}$ be the projective closure of
$Y$. As $Y$ is finite, its projective closure is:

$$
\mathbb{Y}=\{[(1,\alpha)]\, \vert\,
\alpha\in Y\}\subset \mathbb{P}_{K}^{s}.
$$

Let $P_1,\ldots,P_m$ be the points of $Y$,
and let $S_{\leq d}$ be the $K$-vector space of all polynomials of $S$ of
degree at most $d$. The {\it evaluation map\/} 

\begin{equation}
\label{affine-ev-map}
{\rm ev}_d^a\colon S_{\leq d}\longrightarrow K^{|Y|},\ \ \ \ \ 
f\mapsto \left(f(P_1),\ldots,f(P_m)\right),
\end{equation}

\noindent defines a linear map of
$K$-vector spaces. The image of ${\rm ev}_d^a$, denoted by $C_{Y}(d)$,
defines a {\it linear code\/}. We call
$C_{Y}(d)$ the {\it affine Reed-Muller-type code\/} of
degree $d$ on $Y$.

\medskip

Let $y^{v_1},\ldots,y^{v_s}$ be a finite set of monomials.
As usual if $v_i=(v_{i1},\ldots,v_{in})\in\mathbb{N}^n$,
then we set
$$
y^{v_i}=y_1^{v_{i1}}\cdots y_n^{v_{in}},\ \ \ \ i=1,\ldots,s,
$$
where $y_1,\ldots,y_n$ are the indeterminates of a ring of
polynomials with coefficients in $K$. Consider the following set
parameterized  by these monomials
$$
X^*:=\{(x_1^{v_{11}}\cdots x_n^{v_{1n}},\ldots,x_1^{v_{s1}}\cdots x_n^{v_{sn}})\in{K^{s}}	\vert\, x_i\in K^*\mbox{ for all }i\}
$$
where $K^*=K\setminus\{0\}$. Following \cite{affine-codes} we call $X^\ast$ an
{\it affine algebraic toric set\/} parameterized  by
$y^{v_1},\ldots,y^{v_s}$. The set $X^\ast$  is a multiplicative group under
componentwise multiplication. Following \cite{affine-codes} we call
$C_{X^\ast}(d)$ a {\it parameterized affine code\/} of
degree $d$. Parameterized affine codes are special types of affine Reed-Muller codes 
in the sense of \cite[p.~37]{tsfasman}. If $s=n=1$ and $v_1=1$, then
$X^*=\mathbb{F}_q^*$ and we obtain the classical Reed-Solomon code of
degree $d$ \cite[p.~42]{stichtenoth}. Some families of evaluation codes have been studied extensively, including several
variations of Reed-Muller codes \cite{delsarte-goethals-macwilliams,duursma-renteria-tapia,
gold-little-schenck,GR,GRT,sorensen}. 

\medskip

The {\it dimension\/} and {\it length\/} of $C_{X^\ast}(d)$
are given by $\dim_K C_{X^\ast}(d)$ and $|X^\ast|$ respectively. The dimension
and length are two of the {\it basic parameters} of a linear code, the third
basic parameter is the {\it minimum
distance\/}. The minimum distance of $C_{X^\ast}(d)$ will be denoted by
$\delta_d$. The basic parameters of $C_{X^\ast}(d)$ are related by the
Singleton bound for the minimum distance
$$
\delta_d\leq |X^\ast|-\dim_KC_{X^\ast}(d)+1.
$$

\medskip

The parameters of evaluation codes over finite fields have been computed in a number
of cases. If $\mathbb{X}^\ast$ is the image of the affine space
$K^{s}$ under the map $K^{s}\rightarrow
\mathbb{P}^{s} $, $x\mapsto [(1,x)]$, the parameters
of $C_{\mathbb{X}^\ast}(d)$ are described in
\cite[Theorem~2.6.2]{delsarte-goethals-macwilliams}.

In this article we focus on linear codes parameterized by the edges of a graph $\mathcal{G}$ (see Definition \ref{asso-graph}) 
which has $m$ components and each component is an odd cycle; all our work is based on the affine space. In \cite{even-cycles}, 
the authors work with codes parameterized by even cycles over the projective space, they find an explicit description 
for a set of generators of the vanishing ideal (see Definition~\ref{def-van-ideal}) associated to an even cycle. In the same article,  
we can also find the length of the code associated to a graph $\mathcal{G}$ with $m$ connected components (see {\rm\cite[Theorem~3.2]{even-cycles}}).
In \cite{GJRE-parmet-cycles}, the authors work with codes parameterized by odd cycles over the projective space 
and they prove that parameterized sets by odd cycles over the projective space are 
projective torus. Therefore, if we work with codes parameterized by odd cycles over the projective space, we get 
parameterized codes over projective torus, and these codes are very well known. There is not any paper that works with 
parameterized codes by odd cycles over the affine space, in contrast with \cite{GJRE-parmet-cycles}, we are going to 
see that if $2 \mid q-1$ then affine sets parameterized by odd cycles are not affine torus.

The contents of this paper are as follows. In
Section~\ref{prelim-codes-graphs} we introduce the preliminaries and explain the connection 
between the codes and graphs. In Section~\ref{vanishing-odd-cycles} we provide an explicit description 
of a set of generators from the vanishing ideal of an affine set parameterized by a graph $\mathcal{G}$ with $m$ connected components, 
where each component is an odd cycle (see Theorem~\ref{second-vanish-ideal-main-theorem}). The set parameterized by the edges 
of a graph $\mathcal{G}$ will be denoted by $X_{\mathcal{G}}^{\ast}$ (see Definition \ref{asso-graph}) .

\medskip 

\begin{definition}
\label{def-van-ideal}

\begin{enumerate}

\item[$(\mathrm{i})$] Let $X \subseteq K^{s}$. We set: 

\begin{center}
$\mathbf{I}(X)=\{f\in{S} \mid f(x_{1},\ldots,x_{s})=0$ $\forall x=(x_{1},\ldots,x_{s})\in{X} \}$.
\end{center}
    
\item[$(\mathrm{ii})$] Let $\mathbb{X} \subseteq \mathbb{P}^{s-1}_{K}$. We set: 

\begin{center}
$\mathbf{I}(\mathbb{X})= \langle\{f\in{S} \mid f$ is homogeneous and $f(x)=0$ $ \forall x\in{\mathbb{X}} \} \rangle $
\end{center}

\end{enumerate}

Clearly $\mathbf{I}(X)$ is an ideal, we will call to $\mathbf{I}(X)$ (resp. $\mathbf{I}(\mathbb{X})$ ) the {\it vanishing ideal } of $X$ 
(resp. $\mathbb{X}$).

\end{definition}

For a set $X^\ast$ parameterized by monomials, the main algebraic fact about $\mathbf{I}(X^{\ast})$ that we
use is a remarkable result of \cite{affine-codes} showing that $\mathbf{I}(X^{\ast})$ is a binomial ideal.  
In Section~\ref{regularity-section} we give an explicit formula for the regularity of $\mathbf{I}(X_{\mathcal{G}}^{\ast})$,
where $\mathcal{G}$ is graph with $m$ connected components and each component is an odd cycle. Finally, in Section~\ref{dim-section},  
if $\mathcal{G}$ is an odd cycle of length $k$, we prove that the Hilbert function of $\mathbf{I}(X_{\mathcal{G}}^{\ast})$ can be 
written as a linear combination of Hilbert functions of degenerate torus.

For all unexplained
terminology and additional information  we refer to
\cite{EisStu} (for the theory of binomial ideals),
\cite{AL,Sta1} (for the theory of polynomial ideals and Hilbert functions).

\section{Preliminaries: Codes Associated to a Graphs}\label{prelim-codes-graphs}

We will use the notation and definitions used in the
introduction. In this section, we introduce the connection between 
graphs and codes and we present the basic theory of Hilbert functions that we
will use later.

\medskip

\begin{theorem}{\rm(Combinatorial Nullstellensatz {\rm\cite[Theorem~1.2]{alon-cn}})}
\label{com-null}
Let $R=K[y_{1},\ldots,y_{n}]$ be a polynomial ring over a field $K$, let $f\in{R}$, and let $a=(a_{1},\ldots,a_{n})\in{\mathbb{N}^{n}}$. Suppose that 
the coefficient of $y^{a}$ in $f$ is non zero and $\deg(f)=a_{1}+\cdots+a_{n}$. If $S_{1},\ldots,S_{n}$ are subsets of $K$, with 
$\left|S_{i}\right| > a_{i}$ for all $i$, then there are $s_{1}\in{S_{1}},\ldots,s_{n}\in{S_{n}}$ such that $f(s_{1},\ldots,s_{n})\neq 0$. 
\end{theorem}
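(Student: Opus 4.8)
The plan is to argue by contradiction. I would assume that $f(s_1,\ldots,s_n)=0$ for every choice $s_i\in S_i$ and derive a contradiction with the hypothesis that the coefficient of $y^a$ in $f$ is nonzero. Since $|S_i|>a_i$, I may first shrink each $S_i$ to a subset $S_i'$ with $|S_i'|=a_i+1$; only the vanishing of $f$ on the smaller grid $S_1'\times\cdots\times S_n'$ will be used. Set $t=\deg(f)=a_1+\cdots+a_n$.

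First I would introduce the auxiliary one-variable polynomials $g_i(y_i)=\prod_{s\in S_i'}(y_i-s)$, each of degree $a_i+1$ with leading term $y_i^{a_i+1}$. Writing $g_i(y_i)=y_i^{a_i+1}-r_i(y_i)$ with $\deg(r_i)\leq a_i$, the relation $g_i(s)=0$ for $s\in S_i'$ yields the reduction rule $s^{a_i+1}=r_i(s)$, valid at every point of $S_i'$. Applying these rules repeatedly to $f$, I reduce it to a polynomial $\tilde{f}$ with $\deg_{y_i}\tilde{f}\leq a_i$ for every $i$, such that $\tilde{f}$ and $f$ take the same values on all of $S_1'\times\cdots\times S_n'$, since replacing $y_i^{a_i+1}$ by $r_i$ preserves the value at each grid point.

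The crucial step is to control the coefficient of $y^a$ under this reduction. Each application of a rule replaces a monomial divisible by $y_i^{a_i+1}$ by monomials of strictly smaller total degree, so the reduction never raises total degree and strictly lowers it whenever it acts. Among the degree-$t$ monomials of $f$, the monomial $y^a$ is the only one not divisible by any $y_i^{a_i+1}$: any other monomial $y^b$ with $\sum b_i=t=\sum a_i$ and $b\neq a$ must have $b_i\geq a_i+1$ for some $i$, so it is reduced to strictly lower degree and can never regenerate $y^a$. Hence the coefficient of $y^a$ in $\tilde{f}$ equals its coefficient in $f$, which is nonzero; in particular $\tilde{f}\neq 0$.

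To conclude I would invoke, and prove by induction on $n$, the elementary fact that a nonzero polynomial of degree at most $a_i$ in each $y_i$ cannot vanish on a grid $S_1'\times\cdots\times S_n'$ with $|S_i'|=a_i+1>a_i$: fixing the last $n-1$ coordinates at grid values and viewing the result as a one-variable polynomial in $y_n$ of degree $\leq a_n$ with $a_n+1$ roots forces its coefficients, which are bounded-degree polynomials in $y_1,\ldots,y_{n-1}$ vanishing on the smaller grid, to be zero, and induction closes the argument. Applying this to $\tilde{f}$, which does vanish on the grid, forces $\tilde{f}=0$, contradicting $\tilde{f}\neq 0$. I expect the main obstacle to be the bookkeeping of the third paragraph, namely verifying that the reduction preserves the top coefficient exactly, rather than the final vanishing lemma, which is routine.
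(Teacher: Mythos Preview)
Your argument is correct and is essentially Alon's original proof of the Combinatorial Nullstellensatz: reduce $f$ modulo the grid polynomials $g_i(y_i)=\prod_{s\in S_i'}(y_i-s)$, observe that the top monomial $y^a$ survives unchanged because every other degree-$t$ monomial must have some exponent exceeding $a_i$ and hence drops in total degree under reduction, and then apply the elementary grid non-vanishing lemma to the reduced polynomial $\tilde f$. The bookkeeping you flag in the third paragraph is indeed the only subtle point, and you have it right: reduction strictly lowers total degree whenever it acts, so nothing of degree less than $t$ can regenerate the monomial $y^a$.

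Note, however, that the paper does not give its own proof of this theorem; it merely states it with a citation to \cite{alon-cn} and uses it as a black box in the later arguments (for instance in Proposition~\ref{dege-torus-dim} and in the analysis of binomials in Section~\ref{vanishing-odd-cycles}). So there is no in-paper proof to compare against; what you have written is precisely the proof one finds in Alon's paper.
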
 

\medskip

Let $X^\ast \subseteq K^{s-1}$ be an affine algebraic toric set parameterized  by
$y^{v_1},\ldots,y^{v_{s-1}}$. The kernel of the evaluation map ${\rm ev}_d$, defined in
Eq.~(\ref{affine-ev-map}), is $\mathbf{I}(X^{\ast})_{\leq d}$; in other words $\mathbf{I}(X^{\ast})_{\leq d}$ 
is the set of all the polynomials of degree less or equal to $d$ that are in $\mathbf{I}(X^{\ast})$, thus 
there is an isomorphism of $K$-vector spaces: 

\begin{center}
$S_{\leq d} \slash \mathbf{I}(X^{\ast})_{\leq d} \simeq C_{X^\ast}(d)$. 
\end{center}   

The {\it affine Hilbert function} of $\mathbf{I}(X^{\ast})$ is given by: 

\begin{center}
$H_{X^\ast}(d)= \dim_{K} S_{\leq d} \slash \mathbf{I}(X^{\ast})_{\leq d} = \dim_{K} C_{X^\ast}(d)$.
\end{center}

Let $\mathbb{X} \subseteq \mathbb{P}^{s-1}_{K}$ be the projective closure of $X^\ast$ 
and let $C_{\mathbb{X}}(d)$ be a
projective Reed-Muller code of degree $d$. It is shown that the codes $C_{\mathbb{X}}(d)$ and $C_{X^{\ast}}(d)$
have the same basic parameters (see {\rm\cite[Theorem~2.4]{affine-codes}}). The kernel of the
evaluation map ${\rm ev}_d$, defined in
Eq.~(\ref{pro-ev-map}), is precisely $\mathbf{I}(\mathbb{X})_d$ the degree
$d$ piece of $\mathbf{I}(\mathbb{X})$. Hence there is an isomorphism of $K$-vector spaces
$$S_d/\mathbf{I}(\mathbb{X})_d\simeq C_{\mathbb{X}}(d).$$

Two of the basic parameters of $C_{\mathbb{X}}(d)$ can be expressed
using Hilbert functions of standard graded algebras \cite{Sta1}, as we
now  explain. Recall that the
{\it Hilbert function\/} of
$\mathbf{I}(\mathbb{X})$ is given by
$$H_{\mathbb{X}}(d):=\dim_K\,
(S/\mathbf{I}(\mathbb{X}))_d=\dim_K\,
S_d/\mathbf{I}(\mathbb{X})_d=\dim_KC_{\mathbb{X}}(d).$$
The unique polynomial $h_{\mathbb{X}}(t)=\sum_{i=0}^{k-1}c_it^i\in
\mathbb{Z}[t]$ of degree $k-1=\dim(S/\mathbf{I}(\mathbb{X}))-1$ such that $h_{\mathbb{X}}(d)=H_{\mathbb{X}}(d)$ for
$d\gg 0$ is called the {\it Hilbert polynomial\/} of $\mathbf{I}(\mathbb{X})$. The
integer $c_{k-1}(k-1)!$, denoted by ${\rm deg}(S/\mathbf{I}(\mathbb{X}))$, is
called the {\it degree\/} or  {\it multiplicity} of $S/\mathbf{I}(\mathbb{X})$. In our
situation
$h_{\mathbb{X}}(t)$ is a non-zero constant because $S/\mathbf{I}(\mathbb{X})$ has dimension $1$.
Furthermore $h_{\mathbb{X}}(d)=|\mathbb{X}|$ for $d\geq |\mathbb{X}|-1$, see \cite[Lecture
13]{harris}. This means that $|\mathbb{X}|$ equals the {\it degree\/}
of $S/\mathbf{I}(\mathbb{X})$. Thus $H_{\mathbb{X}}(d)$ and ${\rm deg}(S/\mathbf{I}(\mathbb{X}))$ equal the
dimension and the length of $C_{\mathbb{X}}(d)$ respectively. There are algebraic
methods, based on elimination theory and Gr\"obner bases, to compute
the dimension and the length of $C_{\mathbb{X}}(d)$ \cite{algcodes}.

\begin{definition}
The {\it regularity index\/} of $S/\mathbf{I}(\mathbb{X})$, denoted by
${\rm reg}(S/\mathbf{I}(\mathbb{X}))$, is the least integer $p\geq 0$ such that
$h_{\mathbb{X}}(d)=H_{\mathbb{X}}(d)$ for $d\geq p$.
\end{definition}

As $S/\mathbf{I}(\mathbb{X})$ is a $1$-dimensional Cohen-Macaulay graded algebra \cite{geramita-kreuzer-Robbiano}, 
the regularity index of  $S/\mathbf{I}(\mathbb{X})$ is the Castelnuovo-Mumford regularity of  $S/\mathbf{I}(\mathbb{X})$
\cite{Eisenbud-geometry}. By Hilbert-Serre Theorem, the Hilbert series of $S \slash \mathbf{I}(\mathbb{X})$ can be uniquely written as 
$F_{\mathbb{X}}(t)=\frac{f(t)}{1-t}$, where $f$ is a polinomial of degree equal to the regularity of 
$S \slash \mathbf{I}(\mathbb{X})$. From the exact sequence: 

\begin{center}
$0 \rightarrow (S \slash \mathbf{I}(\mathbb{X}))[-1] \stackrel{t_{s}}{\rightarrow} S \slash \mathbf{I}(\mathbb{X}) \rightarrow S \slash (t_{s},\mathbf{I}(\mathbb{X})) \rightarrow 0$,
\end{center}

\noindent we deduce $F(S \slash (t_{s},\mathbf{I}(\mathbb{X})),t)=f(t)$, where $F(S \slash (t_{s},\mathbf{I}(\mathbb{X})),t)$ is the Hilbert series of
$S \slash (t_{s},\mathbf{I}(\mathbb{X}))$.

\medskip

Let $>$ be a monomial order on $S$ and let $\left\langle 0 \right\rangle \neq I \subseteq S$
be an ideal. If $f$ is a non-zero polynomial in $S$, we can write: 

\begin{center}
$f=\lambda_{1}t^{\alpha_{1}}+ \cdots +\lambda_{r}t^{\alpha_{r}}$,
\end{center}

\noindent with $\lambda_{i}\in{K^{\ast}}$ for all $i$ and $t^{\alpha_{1}} > \cdots > t^{\alpha_{r}}$. The {\it leading monomial} $t^{\alpha_{1}}$ of $f$
is denoted by $LM(f)$ and the {\it leading term} $\lambda_{1}LM(f)$ of $f$ is denoted by $LT(f)$. We denote by $LT(I)$ the set of leading terms of 
elements of $I$. The {\it ideal of leading terms} of $I$ is the monomial ideal of $S$ given by:  

\begin{center}
$\left\langle LT(I)\right\rangle$.
\end{center} 

\begin{definition}

Let $\left\langle 0 \right\rangle \neq I \subseteq S$
be and ideal. A monomial $t^{a}$ is called a {\it standard monomial} of $S \slash I$, with respect to $>$, if $t^{a}$ is not the 
leading monomial of any polynomial in $I$, that is, $t^{a} \notin \left\langle LT(I)\right\rangle$. A polynomial $f$ is called 
{\it standard} if $f \neq 0$ and $f$ is $k$-linear combination of standard monomials.

\end{definition}

The set of standard monomials, denoted by $\Delta_{>}(I)$, is called the {\it footprint} of $S \slash I$. The image 
of $\Delta_{>}(I)$, under the canonical map $S \longrightarrow S \slash I$, is a basis of $S \slash I$ as a $K$-vector space. 
In particular If $X^\ast \subseteq K^{s-1}$ is an affine algebraic toric set parameterized  by $y^{v_1},\ldots,y^{v_{s-1}}$, then $H_{X^\ast}(d)$ 
is the number of standard monomials of degree less or equal to $d$.  

\medskip

The affine algebraic toric set parameterized by $y_{1},\ldots,y_{s}$ will be denoted by $T^{s}$. 
We call $T^{s}$ an affine torus; 

\begin{center}
$T^{s}=\{(x_{1},\ldots,x_{s}) \mid x_{i}\in{K^{\ast}}\}$. 
\end{center}

\medskip

It is known that $\mathbf{I}(T^{s})=\left\langle \{t_{i}^{q-1}-1\}_{i=1}^{s}\right\rangle$. 
Let $X^\ast \subseteq K^{s}$ be an affine algebraic toric set parameterized  by
$y^{v_1},\ldots,y^{v_{s}}$. By \cite{affine-codes} we know that $\mathbf{I}(X^{\ast})$
is generated by binomials $t^{a}-t^{b}\in{S}$ where $a,b\in{\mathbb{N}^{s}}$. In addition 
there are a few observations to be made. 

\begin{itemize}

\item Since $X^{\ast} \subseteq T^{s}$, then $\mathbf{I}(T^{s}) \subseteq \mathbf{I}(X^{\ast})$, 
hence $\{t_{i}^{q-1}-1\}_{i=1}^{s} \subseteq \mathbf{I}(X^{\ast})$.

\item Let $f=t^{a}-t^{b}$ be a nonzero binomial of $S$. If $\gcd(t^{a},t^{b}) \neq 1$, then we can factor the greatest common 
divisor $t^{c}$ from both $t^{a}$ and $t^{b}$ to obtain $f=t^{c}(t^{a'}-t^{b'})$, for some $a' , b' \in{\mathbb{N}^{s}}$. Since $t^{c}$ never 
is zero on $T^{s}$, for any $c\in{\mathbb{N}^{s}}$, we deduce that $f\in{\mathbf{I}(X^{\ast})}$ if and only if $t^{a'}-t^{b'}\in{\mathbf{I}(X^{\ast})}$.
Accordingly, when looking for binomials generators of $\mathbf{I}(X^{\ast})$ we may restrict ourselves to those binomials $t^{a}-t^{b}$ such that  
$t^{a}$ and $t^{b}$ have no common divisors.

\item Given $a=(a_{1},\ldots,a_{s})\in{\mathbb{N}^{s}}$, we set $\left|a \right|=a_{1}+\cdots+a_{s}$ and $\mathrm{supp}(a)=\{i \mid a_{i} \neq 0\}$. Then clearly,
$t^{a}$ and $t^{b}$ have no common divisors if and only if $\mathrm{supp}(a) \cap \mathrm{supp}(b)=\emptyset$.

\end{itemize} 

Let $\mathcal{G}$ be a simple graph with vertex set $V_{\mathcal{G}}=\{v_{1},\ldots,v_{n}\}$ and edge set 
$E_{\mathcal{G}}=\{e_{1},\ldots,e_{s}\}$. For and edge $e_{i}=\{v_{j},v_{k}\}$, where $v_{j},v_{k}\in{V_{\mathcal{G}}}$, 
let $\nu_{i}=\mathbf{e}_{j}+\mathbf{e}_{k}\in{\mathbb{N}^{n}}$, where, for $1 \leq j \leq n$, $\mathbf{e}_{j}$ is the $j-$th 
element of the canonical basis of $\mathbb{Q}^{n}$. 

\begin{definition}
\label{asso-graph}
The {\it affine algebraic toric set associated} to $\mathcal{G}$ is the affine toric set parameterized 
by the $n-$tuples $\nu_{1},\ldots,\nu_{s}\in{\mathbb{N}^{n}}$, obtained from the edges of $\mathcal{G}$. If 
$X^{\ast}_{\mathcal{G}}$ is the affine parameterized toric set associated to $\mathcal{G}$ we call its 
associated linear code $C_{X^{\ast}_{\mathcal{G}}}(d)$ the {\it parameterized affine code associated }
to $\mathcal{G}$ and we refer to the vanishing ideal of $X^{\ast}_{\mathcal{G}}$ as the {\it vanishing ideal }
over $\mathcal{G}$.  
    
\end{definition}

If $x=(x_{1},\ldots,x_{n})\in{(K^{\ast})^{n}}$ and $e_{i}=\{v_{j},v_{k}\}$ is an edge of $\mathcal{G}$, 
we set $x^{e_{i}}=x^{\mathbf{e}_{j}+\mathbf{e}_{k}}=x_{j}x_{k}$, so that the structural map 
$\theta :(K^{\ast})^{n} \rightarrow X^{\ast}_{\mathcal{G}}$ is given by $x \rightarrow (x^{e_{1}},\ldots,x^{e_{s}})$. 
If $\mathcal{G}$ is a graph with $m$ connected components, of which $\epsilon$ are non-bipartite, the length of 
$C_{X^{\ast}_{\mathcal{G}}}(d)$ has been determined. 

\begin{theorem}
Let $\mathcal{G}$ be a graph with $m$ connected components, of which $\epsilon$ are non-bipartite. Suppose that 
$\mathcal{G}$ has $n$ vertices. Then:

\begin{center}

$ o(X^{\ast}_{\mathcal{G}})= \left \{ \begin{matrix} (q-1)^{n-m+\epsilon} & \mbox{if } 2 \nmid q-1
\\  & 
\\ \displaystyle\frac{(q-1)^{n-m+\epsilon}}{2^{\epsilon}} & \mbox{if } 2 \mid q-1\end{matrix}\right.$

\end{center} 

\end{theorem}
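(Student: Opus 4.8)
The plan is to regard the structural map $\theta\colon(K^{\ast})^{n}\to X^{\ast}_{\mathcal{G}}$, $x\mapsto(x^{e_{1}},\ldots,x^{e_{s}})$, as a homomorphism of finite abelian groups. For $x,y\in(K^{\ast})^{n}$ and an edge $e_{i}=\{v_{j},v_{k}\}$ we have $(xy)^{e_{i}}=(x_{j}y_{j})(x_{k}y_{k})=x^{e_{i}}y^{e_{i}}$, so $\theta$ is indeed a homomorphism with image $X^{\ast}_{\mathcal{G}}$. The first isomorphism theorem then gives
\begin{center}
$o(X^{\ast}_{\mathcal{G}})=\dfrac{|(K^{\ast})^{n}|}{|\ker\theta|}=\dfrac{(q-1)^{n}}{|\ker\theta|}$,
\end{center}
so everything reduces to computing $|\ker\theta|$, where $\ker\theta=\{x\in(K^{\ast})^{n}\mid x_{j}x_{k}=1\text{ for every edge }\{v_{j},v_{k}\}\in E_{\mathcal{G}}\}$.

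First I would note that the relations defining $\ker\theta$ only couple coordinates within one connected component, so $\ker\theta$ factors as a direct product over the $m$ components and it suffices to count solutions on each component $C$ separately. Fixing a base vertex $v_{0}\in C$ and a spanning tree of $C$, the relation $x_{u}x_{w}=1$ propagated along the tree forces every coordinate $x_{w}$ to equal $t:=x_{v_{0}}$ if $w$ is at even tree-distance from $v_{0}$ and $t^{-1}$ if it is at odd distance. Thus an element of $\ker\theta$ restricted to $C$ is entirely determined by the single value $t\in K^{\ast}$, and what remains is to decide for which $t$ this forced assignment also satisfies the non-tree edge relations.

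Here the character of $C$ decides the count. If $C$ is bipartite the tree-parity is a proper $2$-colouring, so every edge (tree or not) joins a $t$-vertex to a $t^{-1}$-vertex and the relation $x_{u}x_{w}=tt^{-1}=1$ holds automatically; hence all $q-1$ choices of $t$ are admissible and $C$ contributes a factor $q-1$. If $C$ is non-bipartite it contains an odd cycle, equivalently some edge $\{u,w\}$ joins two vertices of equal tree-parity; for such an edge the forced values coincide and $x_{u}x_{w}=1$ becomes $t^{2}=1$. Conversely $t^{2}=1$ makes $t=t^{-1}$, so the constant assignment $x\equiv t$ on $C$ satisfies all relations; therefore the admissible $t$ are exactly the square roots of $1$ in $K^{\ast}$, of which there are $\gcd(2,q-1)$ because $K^{\ast}$ is cyclic of order $q-1$.

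Multiplying the contributions of the $m-\epsilon$ bipartite and $\epsilon$ non-bipartite components yields $|\ker\theta|=(q-1)^{m-\epsilon}(\gcd(2,q-1))^{\epsilon}$, so
\begin{center}
$o(X^{\ast}_{\mathcal{G}})=\dfrac{(q-1)^{n-m+\epsilon}}{(\gcd(2,q-1))^{\epsilon}}$,
\end{center}
and distinguishing $\gcd(2,q-1)=1$ (when $2\nmid q-1$) from $\gcd(2,q-1)=2$ (when $2\mid q-1$) gives exactly the two cases in the statement. I expect the main obstacle to be the non-bipartite consistency step: one must show simultaneously that an odd cycle forces $t^{2}=1$ and that every solution of $t^{2}=1$ yields a globally consistent assignment, so that the contribution of such a component is exactly $\gcd(2,q-1)$ rather than merely at most that. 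The remainder is bookkeeping with the first isomorphism theorem.
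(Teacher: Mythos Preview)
Your argument is correct. The paper does not actually prove this theorem: its entire proof reads ``It follows from \cite[Lemma~3.1]{even-cycles},'' i.e.\ it simply invokes the corresponding result of Neves, Vaz Pinto and Villarreal. What you have written is a clean self-contained derivation of that lemma via the first isomorphism theorem applied to the structural homomorphism $\theta$, together with the spanning-tree propagation that pins down $\ker\theta$ componentwise. The only delicate point---that a non-bipartite component contributes exactly $\gcd(2,q-1)$ rather than merely at most that---you handle correctly: the odd cycle forces $t^{2}=1$, and conversely $t^{2}=1$ collapses the two parity classes so the constant assignment $x\equiv t$ satisfies every edge relation. So your proposal is not so much a different route as a route where the paper offers none; it is essentially the standard argument one would expect to find behind the cited lemma.
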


\begin{proof}
It follows from {\rm\cite[Lemma~3.1]{even-cycles}}. 
\end{proof}

\begin{corollary}
Let $\mathcal{G}$ be a graph with $m$ connected components. Suppose that each component is a $k$-cycle and $k$ is odd. 
Then: 

\begin{center}
$o(X_{\mathcal{G}}^{\ast}) = \left \{ \begin{matrix} (q-1)^{km} & \mbox{if } 2 \nmid q-1
\\  & 
\\ \displaystyle\frac{(q-1)^{km}}{2^{m}} & \mbox{if } 2 \mid q-1\end{matrix}\right. $
\end{center}

\end{corollary}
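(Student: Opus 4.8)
The plan is to obtain this directly as a specialization of the preceding theorem, so the work consists entirely of computing the two quantities $n$ and $\epsilon$ that appear in its formula. First I would count vertices: since $\mathcal{G}$ has $m$ connected components and each one is a $k$-cycle, and a $k$-cycle has exactly $k$ vertices, the total number of vertices is $n = km$.

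Next I would determine $\epsilon$, the number of non-bipartite components. Here I would invoke the elementary graph-theoretic fact that a cycle is bipartite precisely when its length is even; since $k$ is odd by hypothesis, every component is a non-bipartite odd cycle. Hence all $m$ components are non-bipartite, which gives $\epsilon = m$ in the notation of the preceding theorem.

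Finally I would substitute $n = km$ and $\epsilon = m$ into the two branches of the formula. In the exponent we obtain $n - m + \epsilon = km - m + m = km$, so in the case $2 \nmid q-1$ the length is $(q-1)^{km}$, and in the case $2 \mid q-1$ the length is $(q-1)^{n-m+\epsilon}/2^{\epsilon} = (q-1)^{km}/2^{m}$. These are exactly the two expressions in the statement, so the corollary follows.

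Because this is a purely mechanical substitution into an already-established theorem, I do not expect any genuine obstacle. The only points requiring care are the two combinatorial observations underlying the bookkeeping, namely that each $k$-cycle contributes $k$ vertices (so that the vertex counts aggregate to $km$) and that an odd cycle is non-bipartite (so that $\epsilon$ counts all $m$ components rather than some proper subset); once these are recorded, the result is immediate.
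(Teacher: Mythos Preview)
Your proposal is correct and matches the paper's approach: the paper gives no separate proof for this corollary, treating it as an immediate specialization of the preceding theorem, and your substitution $n=km$, $\epsilon=m$ (using that odd cycles are non-bipartite) is exactly the intended reading.
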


\section{Vanishing Ideals of Odd Cycles}\label{vanishing-odd-cycles}

In this Section we give an explicit description of the 
vanishing ideal associated to a graph $\mathcal{G}$ with $m$ connected components 
and each component is an odd cycle.

\medskip

We continue to use the notation and definitions used in the
introduction. Let $\mathcal{G}$ be a graph with $m$ connected components, 
suppose that each component is a $k$-cycle with $k=2\gamma+1$. Let
$C_{1}^{k}=x_{1} \cdots x_{k}x_{1}$, $C_{2}^{k}=x_{k+1} \cdots x_{2k}x_{k+1}$, 
$\ldots$, $C_{m-1}^{k}=x_{(m-2)k+1} \cdots x_{(m-1)k}x_{(m-2)k+1}$, 
$C_{m}^{k}=x_{(m-1)k+1} \cdots x_{mk}x_{(m-1)k+1}$ be the $m$ components of $\mathcal{G}$. If $2 \nmid q-1$, 
then $o(X_{\mathcal{G}}^{\ast})=(q-1)^{km}$, therefore $X_{\mathcal{G}}^{\ast}$ is an affine torus.  
Suppose that $2 \mid q-1$. Let:

\begin{center}

$F_{1}=\{ A \subseteq \{1,\ldots,k\} \mid \left|A\right|=\gamma \}$, 

\medskip

$F_{2}=\{ A \subseteq \{k+1,\ldots,2k\} \mid \left|A\right|=\gamma \}$,

$\vdots$ 

$F_{m}=\{ A \subseteq \{(m-1)k+1,\ldots,mk\} \mid \left|A\right|=\gamma \}$. 

\end{center}

Let $A_{i}=\{ t_{\alpha_{1}}^{\frac{q-1}{2}} \cdots t_{\alpha_{\gamma}}^{\frac{q-1}{2}}- t_{w_{1}}^{\frac{q-1}{2}} \cdots t_{w_{\gamma+1}}^{\frac{q-1}{2}} \mid \{\alpha_{1},\ldots,\alpha_{\gamma}\} \in{F_{i}}$ and $ \{w_{1},\ldots,w_{\gamma+1}\}=\{(i-1)k+1,\ldots,ik\}-\{\alpha_{1},\ldots,\alpha_{\gamma}\} \}$. Note that 
$A_{i} \subseteq K[t_{(i-1)k+1},\ldots,t_{ik}]$.  

\begin{lemma}
\label{lemma-contain}
For each $i\in{\{1,\ldots,m\}}$ we have that $A_{i} \subseteq \mathbf{I}(X_{\mathcal{G}}^{\ast})$.
\end{lemma}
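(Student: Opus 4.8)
The plan is to verify directly that each generator of $A_i$ vanishes at every point of $X_{\mathcal{G}}^{\ast}$, exploiting the structural map $\theta$ together with the fact that every vertex of a cycle has degree $2$. First I would fix $i\in\{1,\ldots,m\}$ and an arbitrary point $P=\theta(x)\in X_{\mathcal{G}}^{\ast}$, where $x=(x_1,\ldots,x_{mk})$ with each $x_j\in K^{\ast}$, so that $t_j(P)=x_ax_b$ whenever the edge indexed by $j$ joins the vertices $v_a,v_b$. Taking a typical generator $g=t_{\alpha_1}^{(q-1)/2}\cdots t_{\alpha_\gamma}^{(q-1)/2}-t_{w_1}^{(q-1)/2}\cdots t_{w_{\gamma+1}}^{(q-1)/2}$ of $A_i$, I would write $u$ and $v$ for the values at $P$ of its first and second monomial. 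Since $\{\alpha_1,\ldots,\alpha_\gamma\}$ and $\{w_1,\ldots,w_{\gamma+1}\}$ partition the edge-index set $\{(i-1)k+1,\ldots,ik\}$ of the $i$-th component, proving $g(P)=0$ reduces to showing $u=v$.

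The first key step is the identity $uv=1$. Here $uv$ is the value at $P$ of $\prod_{j=(i-1)k+1}^{ik} t_j^{(q-1)/2}$, and because every vertex of the $i$-th cycle lies on exactly two of these edges, each vertex-variable occurs precisely twice in the edge-product, giving $\prod_{j=(i-1)k+1}^{ik} t_j(P)=\bigl(x_{(i-1)k+1}\cdots x_{ik}\bigr)^{2}$. Raising to the power $(q-1)/2$ and using $x_j^{q-1}=1$ for $x_j\in K^{\ast}$ yields $uv=\bigl(x_{(i-1)k+1}\cdots x_{ik}\bigr)^{q-1}=1$.

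The second key step upgrades $uv=1$ to the equality $u=v$. Each factor $t_j(P)^{(q-1)/2}$ is a square root of $t_j(P)^{q-1}=1$ in the field $K$, and the polynomial $X^2-1$ has only the roots $1$ and $-1$; hence every such factor, and therefore each of $u$ and $v$, lies in $\{1,-1\}$. Combined with $uv=1$ this forces $u=v$, so $g(P)=u-v=0$. As both $P\in X_{\mathcal{G}}^{\ast}$ and $g\in A_i$ were arbitrary, this gives $A_i\subseteq \mathbf{I}(X_{\mathcal{G}}^{\ast})$.

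The computation itself is short, and the genuine content is the pair of observations driving it: that incidences in a cycle cover each vertex exactly twice, collapsing the total $(q-1)/2$-power to $1$, and that each individual factor is pinned into $\{1,-1\}$, which is what lets the multiplicative relation $uv=1$ be promoted to $u=v$. Accordingly, I expect the only point needing care to be the bookkeeping that $\{\alpha_1,\ldots,\alpha_\gamma\}$ and $\{w_1,\ldots,w_{\gamma+1}\}$ really do partition the edge set of the $i$-th cycle, so that the two monomials of $g$ multiply to the full cycle product $\prod_{j=(i-1)k+1}^{ik} t_j^{(q-1)/2}$; but this is precisely how $F_i$ and its complement were defined, and the remaining facts are just the elementary structure of $K^{\ast}$.
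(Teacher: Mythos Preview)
Your proof is correct and takes a genuinely different route from the paper's. The paper does not argue via the global identity $uv=1$ together with the observation that $u,v\in\{1,-1\}$. Instead, it fixes the specific binomial
\[
g=t_{(i-1)k+1}^{\frac{q-1}{2}}\cdots t_{(i-1)k+\gamma}^{\frac{q-1}{2}}-t_{(i-1)k+\gamma+1}^{\frac{q-1}{2}}\cdots t_{ik}^{\frac{q-1}{2}}
\]
built from consecutive edges, and reduces the general $f\in A_i$ to this canonical $g$ by multiplying $f$ by auxiliary monomials $G_1,\ldots,G_\gamma$ so that $f(w)\cdot\eta=g(w)$ for some $\eta\in K^{\ast}$. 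It then checks $g(w)=0$ by a direct telescoping computation on the consecutive edge-coordinates. Your argument bypasses this reduction entirely: the degree-two incidence of the cycle gives $uv=1$ in one line, and the quadratic-character observation $t_j(P)^{(q-1)/2}\in\{1,-1\}$ immediately upgrades this to $u=v$ for \emph{any} partition of the edge set, not just the consecutive one. What the paper's approach buys is that it avoids appealing to the structure of $\{1,-1\}$ (it never needs to say that $(q-1)/2$-th powers are $\pm 1$); what your approach buys is uniformity and brevity, since no canonical form or auxiliary multiplication is needed.
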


\begin{proof}
Let $i\in{\{1,\ldots,m\}}$ and $f\in{A_{i}}$. Let $f=t_{\alpha_{1}}^{\frac{q-1}{2}} \cdots t_{\alpha_{\gamma}}^{\frac{q-1}{2}}- t_{w_{1}}^{\frac{q-1}{2}} \cdots t_{w_{\gamma+1}}^{\frac{q-1}{2}}$ where $\{\alpha_{1},\ldots,\alpha_{\gamma}\} \in{F_{i}}$ and $ \{w_{1},\ldots,w_{\gamma+1}\}=\{(i-1)k+1,\ldots,ik\}-\{\alpha_{1},\ldots,\alpha_{\gamma}\}$. 

\medskip

Let $g=t_{(i-1)k+1}^{\frac{q-1}{2}} \cdots t_{(i-1)k+\gamma}^{\frac{q-1}{2}}- t_{(i-1)k+\gamma+1}^{\frac{q-1}{2}} \cdots t_{ik}^{\frac{q-1}{2}}$. For each $j\in{\{1,\ldots,\gamma\}}$, let: 

\begin{center}
$G_{j}(t_{(i-1)k+1},\ldots,t_{ik})=\left \{ \begin{matrix} 1 & \mbox{if } \alpha_{j}=(i-1)k+j
\\ t_{\alpha_{j}}^{\frac{q-1}{2}}t_{(i-1)k+j}^{\frac{q-1}{2}} & \mbox{if } \alpha_{j} \neq (i-1)k+j \end{matrix}\right.$ 
\end{center} 

Let $h=fG_{1} \cdots G_{\gamma}$. Let $w=(x_{(i-1)k+1}x_{(i-1)k+2},\ldots,x_{ik-1}x_{ik},x_{ik}x_{(i-1)k+1})$ where 
$x_{i}\in{K^{\ast}}$. Then we obtain: 

\begin{center}
$h(w)=f(w)\eta=g(w)$,
\end{center}

\noindent where $\eta \in{K^{\ast}}$. Therefore: 

\begin{center}
$g(w)=(x_{(i-1)k+1}x_{(i-1)k+2})^{\frac{q-1}{2}}(x_{(i-1)k+2}x_{(i-1)k+3})^{\frac{q-1}{2}} \cdots (x_{(i-1)k+\gamma}x_{(i-1)k+\gamma+1})^{\frac{q-1}{2}}-(x_{(i-1)k+\gamma+1}x_{(i-1)k+\gamma+2})^{\frac{q-1}{2}} \cdots (x_{ik}x_{(i-1)k+1})^{\frac{q-1}{2}}$.
\end{center}

Note that $g(w)=x_{(i-1)k+1}^{\frac{q-1}{2}}x_{(i-1)k+\gamma+1}^{\frac{q-1}{2}}-x_{(i-1)k+\gamma+1}^{\frac{q-1}{2}}x_{(i-1)k+1}^{\frac{q-1}{2}}$, then $g(w)=0$. It follows that $f(w)=0$, therefore $f\in{\mathbf{I}(X_{\mathcal{G}}^{\ast})}$.\end{proof}

\medskip

\begin{proposition}
\label{prop-binomial}
Let $f=t^{a}-t^{b}$ be a binomial in $K[t_{1},\ldots,t_{km}]$ with $\mathrm{supp}(a) \cap \mathrm{supp}(b)=\emptyset$. Then $f=g+f'$ where 
$g\in{\mathbf{I}(T^{km})}$ and $f'=t^{a'}-t^{b'}$ is a binomial such that none of its two terms is divisible by any $t_{i}^{q-1}$ for all $i$ and 
$\mathrm{supp}(a') \cap \mathrm{supp}(b')=\emptyset$.
\end{proposition}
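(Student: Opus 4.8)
The plan is to reduce the two exponent vectors $a$ and $b$ modulo $q-1$ coordinatewise, using the generators $t_i^{q-1}-1$ of $\mathbf{I}(T^{km})$ to soak up the discarded powers into the term $g$. Concretely, I will produce $a'$ and $b'$ from $a$ and $b$ by replacing each exponent with its remainder on division by $q-1$, show that the difference $f-(t^{a'}-t^{b'})$ lands in $\mathbf{I}(T^{km})$, and then check that $f'=t^{a'}-t^{b'}$ has the two required properties.

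First I would, for each index $i\in\{1,\ldots,km\}$, divide by $q-1$: write $a_i=(q-1)c_i+a_i'$ with $c_i\geq 0$ and $0\leq a_i'\leq q-2$, and likewise $b_i=(q-1)d_i+b_i'$ with $0\leq b_i'\leq q-2$. Setting $a'=(a_1',\ldots,a_{km}')$, $b'=(b_1',\ldots,b_{km}')$ and $f'=t^{a'}-t^{b'}$, every exponent appearing in $f'$ is at most $q-2<q-1$; hence neither $t^{a'}$ nor $t^{b'}$ is divisible by any $t_i^{q-1}$, which is the second required property.

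The only step with real content is to verify $t^a-t^{a'}\in\mathbf{I}(T^{km})$ (and symmetrically $t^b-t^{b'}\in\mathbf{I}(T^{km})$), since once this is known the element $g:=(t^a-t^{a'})-(t^b-t^{b'})=f-f'$ automatically lies in $\mathbf{I}(T^{km})$ and yields $f=g+f'$. I would prove membership by evaluation: for any $x=(x_1,\ldots,x_{km})\in T^{km}$ each coordinate $x_i\in K^{\ast}$ satisfies $x_i^{q-1}=1$, so $x^a=\prod_i x_i^{(q-1)c_i+a_i'}=\prod_i (x_i^{q-1})^{c_i}x_i^{a_i'}=\prod_i x_i^{a_i'}=x^{a'}$, whence $t^a-t^{a'}$ vanishes on all of $T^{km}$. (Equivalently one can exhibit the algebraic identity $t^a-t^{a'}=t^{a'}\bigl(\prod_i (t_i^{q-1})^{c_i}-1\bigr)$ and note that the right-hand factor is a combination of the generators $t_i^{q-1}-1$, using that a product of elements congruent to $1$ modulo $\langle t_i^{q-1}-1\rangle$ is again congruent to $1$.) I regard this reduction as the crux; everything else is bookkeeping.

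Finally I would confirm the support condition. Reducing an exponent modulo $q-1$ can only keep it positive or send it to $0$, so $a_i'\neq 0$ forces $a_i\neq 0$, giving $\mathrm{supp}(a')\subseteq\mathrm{supp}(a)$ and similarly $\mathrm{supp}(b')\subseteq\mathrm{supp}(b)$; the hypothesis $\mathrm{supp}(a)\cap\mathrm{supp}(b)=\emptyset$ then immediately gives $\mathrm{supp}(a')\cap\mathrm{supp}(b')=\emptyset$. The single delicate point to flag is the degenerate case $a'=b'$, which by disjointness of supports can only occur when $a'=b'=0$, i.e. exactly when every exponent of $f$ is a multiple of $q-1$; there $f'=0$ and $f$ itself already lies in $\mathbf{I}(T^{km})$, so one simply takes $g=f$.
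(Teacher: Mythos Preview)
Your argument is correct and follows essentially the same strategy as the paper: reduce each exponent modulo $q-1$ and show that the difference lies in $\mathbf{I}(T^{km})$. The paper carries this out by an explicit algebraic telescoping, writing each high-power term as a sum of multiples of $t_i^{q-1}-1$ plus a lower-power remainder, whereas you verify membership more directly by evaluation on $T^{km}$ (and also sketch the equivalent algebraic identity); your handling of the support condition and the degenerate case $a'=b'=0$ is also slightly more explicit than the paper's.
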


\begin{proof}
If $t^{a}$ and $t^{b}$ are not divisible by any  $t_{i}^{q-1}$ for all $i$, we can take $g=0$ and $f=f'$. Then we can write $f$ as: 

\begin{center}
$f=m_{1}t_{\alpha_{1}}^{a_{1}} \cdots t_{\alpha_{n}}^{a_{n}}-m_{2}t_{w_{1}}^{b_{1}} \cdots t_{w_{r}}^{b_{r}}$,
\end{center}

\noindent where $t^{a}=m_{1}t_{\alpha_{1}}^{a_{1}} \cdots t_{\alpha_{n}}^{a_{n}}$ and $t^{b}=m_{2}t_{w_{1}}^{b_{1}} \cdots t_{w_{r}}^{b_{r}}$.
$m_{1}$ and $m_{2}$ are monomials such that none of them is divisible by any $t_{i}^{q-1}$ for all $i$. $m_{1}$ is not divisible by any $t_{\alpha_{i}}$
and $a_{i} \geq q-1$ for all $i$. $m_{2}$ is not divisible by any $t_{w_{j}}$ and $b_{j} \geq q-1$ for all $j$. 

\medskip

By the division algorithm we can write $a_{i}=(q-1)q_{i}+r_{i}$ where $0 \leq r_{i} < q-1$ for all $i=1,\ldots,n$. Note that $q_{i} >0$ for all $i$. Then 
$m_{1}t_{\alpha_{1}}^{a_{1}} \cdots t_{\alpha_{n}}^{a_{n}}=t_{\alpha_{1}}^{r_{1}}m_{1}(t_{\alpha_{1}}^{(q-1)(q_{1}-1)} t_{\alpha_{2}}^{a_{2}}  \cdots t_{\alpha_{n}}^{a_{n}})(t_{\alpha_{1}}^{q-1}-1)$ $ + t_{\alpha_{1}}^{r_{1}}m_{1}(t_{\alpha_{1}}^{(q-1)(q_{1}-1)} \cdots t_{\alpha_{n}}^{a_{n}})$. Let 
$m_{1}'=m_{1}t_{\alpha_{1}}^{a_{1}} \cdots t_{\alpha_{n}}^{a_{n}}$, then: 

\begin{center}
$m_{1}'=\left(\displaystyle\sum_{j=1}^{q_{1}} t_{\alpha_{1}}^{(q-1)(q_{1}-j)} \right)t_{\alpha_{1}}^{r_{1}}m_{1}$ $(t_{\alpha_{2}}^{a_{2}} \cdots t_{\alpha_{n}}^{a_{n}})$ $(t_{\alpha_{1}}^{q-1}-1) + t_{\alpha_{1}}^{r_{1}}m_{1}$ $(t_{\alpha_{2}}^{a_{2}} \cdots t_{\alpha_{n}}^{a_{n}})$. 
\end{center}

If we do the previous analysis with the term $t_{\alpha_{1}}^{r_{1}}m_{1}(t_{\alpha_{2}}^{a_{2}} \cdots t_{\alpha_{n}}^{a_{n}})$ and we continue, 
we get: 

\begin{center}
$m_{1}'=g_{1}+(t_{\alpha_{1}}^{r_{1}}t_{\alpha_{2}}^{r_{2}} \cdots t_{\alpha_{n}}^{r_{n}})m_{1}$, 
\end{center} 

\noindent where $g_{1}\in{\mathbf{I}(T^{km})}$. Let $m_{2}'=m_{2}t_{w_{1}}^{b_{1}} \cdots t_{w_{r}}^{b_{r}}$. By the division algorithm 
we can write $b_{j}=(q-1)q_{j}'+b_{j}'$ where $0 \leq b_{j}' < q-1$ for all $j=1,\ldots,r$. Note that $q_{j}' >0$ for all $j$. If we do with $m_{2}'$
the same procedure that we did with $m_{1}'$, we get: 

\begin{center}
$m_{2}'=g_{2}+ (t_{w_{1}}^{b_{1}'}t_{w_{2}}^{b_{2}'} \cdots t_{w_{r}}^{b_{r}'})m_{2}$, 
\end{center} 

\noindent where $g_{2}\in{\mathbf{I}(T^{km})}$. 
Then $f=g_{1}-g_{2} + (t_{\alpha_{1}}^{r_{1}}t_{\alpha_{2}}^{r_{2}} \cdots t_{\alpha_{n}}^{r_{n}})m_{1}-(t_{w_{1}}^{b_{1}'}t_{w_{2}}^{b_{2}'} \cdots t_{w_{r}}^{b_{r}'})m_{2}$. Let $g=g_{1}-g_{2}$ and $f'=(t_{\alpha_{1}}^{r_{1}}t_{\alpha_{2}}^{r_{2}} \cdots t_{\alpha_{n}}^{r_{n}})m_{1}-(t_{w_{1}}^{b_{1}'}t_{w_{2}}^{b_{2}'} \cdots t_{w_{r}}^{b_{r}'})m_{2}$.\end{proof}

\medskip

\begin{remark}

As a consequence of the last proposition we get $ f\in{\mathbf{I}(X_{\mathcal{G}}^{\ast})}$ if and only if $f' \in{\mathbf{I}(X_{\mathcal{G}}^{\ast})}$. 

\end{remark}

\begin{proposition}
\label{main-prop-one}
Let $i\in{ \{1,\ldots,m \} }$. Let $f=t^{a}-t^{b}\in{K[t_{(i-1)k+1},\ldots,t_{ik}]}$ with $\mathrm{supp}(a) \cap \mathrm{supp}(b)=\emptyset$. Suppose that:

\begin{itemize}

\item All the coordinates of $a$ and $b$ are equal to $\frac{q-1}{2}$. 

\item $a$ and $b$ have no empty support. 

\item $\mathrm{supp}(a) \cup \mathrm{supp}(b)=\{(i-1)k+1,\ldots,ik\}$.

\end{itemize}  

Then we obtain: 

\begin{center}
$f\in{\left\langle \left\{t_{i}^{q-1}-1\right\}_{i=1}^{km}\bigcup \left( \displaystyle\bigcup_{i=1}^{m}{A_i} \right) \right\rangle}$.
\end{center}

\end{proposition}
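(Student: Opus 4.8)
The plan is to reduce the whole statement to one transparent combinatorial identity among the monomials supported on the $i$-th block. Write $B=\{(i-1)k+1,\ldots,ik\}$ and, for a subset $S\subseteq B$, abbreviate $u_{S}=\prod_{j\in S}t_{j}^{(q-1)/2}$. Because $\mathrm{supp}(a)\cap\mathrm{supp}(b)=\emptyset$, $\mathrm{supp}(a)\cup\mathrm{supp}(b)=B$, and every nonzero exponent of $a$ and $b$ equals $(q-1)/2$, the three hypotheses say precisely that $f=u_{S}-u_{S^{c}}$, where $S=\mathrm{supp}(a)$ and $S^{c}=B\setminus S=\mathrm{supp}(b)$, with $\emptyset\neq S\subsetneq B$. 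On the other hand, the elements of $A_{i}$ are exactly the binomials $u_{T}-u_{T^{c}}$ with $|T|=\gamma$ (hence $|T^{c}|=\gamma+1$). Denote by $J$ the ideal on the right-hand side of the statement; since $J$ contains $A_{i}$, it suffices to work with $A_{i}$ and the torus generators.

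The key observation I would isolate is that the family $\{u_{S}\}_{S\subseteq B}$ carries a symmetric-difference multiplication modulo the torus relations. Separating the indices lying in $R\cap T$ (where two factors $t_{j}^{(q-1)/2}$ combine to $t_{j}^{q-1}$) from those in $R\triangle T$ gives, for all $R,T\subseteq B$,
\begin{equation*}
u_{R}\,u_{T}=u_{R\triangle T}\prod_{j\in R\cap T}t_{j}^{q-1}.
\end{equation*}
Moreover $\prod_{j\in C}t_{j}^{q-1}-1\in\mathbf{I}(T^{km})\subseteq J$ for every $C\subseteq B$, by a one-line telescoping induction on $|C|$ using $\prod_{j\in C}t_{j}^{q-1}-1=t_{j_{0}}^{q-1}\bigl(\prod_{j\in C\setminus\{j_{0}\}}t_{j}^{q-1}-1\bigr)+(t_{j_{0}}^{q-1}-1)$. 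Consequently $u_{R}u_{T}-u_{R\triangle T}=u_{R\triangle T}\bigl(\prod_{j\in R\cap T}t_{j}^{q-1}-1\bigr)\in J$, so the symmetric-difference rule is an honest congruence modulo $J$, with an explicit error term.

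With this in hand I would fix any $T\subseteq B$ with $|T|=\gamma$ (possible since $|B|=2\gamma+1$), giving a generator $u_{T}-u_{T^{c}}\in A_{i}\subseteq J$, and set $R=S\triangle T$. Then $R\triangle T=S$ and $R\triangle T^{c}=R\triangle T\triangle B=S\triangle B=S^{c}$, so multiplying the chosen generator by the monomial $u_{R}$ and applying the identity above twice produces the explicit decomposition
\begin{equation*}
f=u_{R}\bigl(u_{T}-u_{T^{c}}\bigr)-u_{S}\Bigl(\prod_{j\in R\cap T}t_{j}^{q-1}-1\Bigr)+u_{S^{c}}\Bigl(\prod_{j\in R\cap T^{c}}t_{j}^{q-1}-1\Bigr).
\end{equation*}
The first summand lies in $J$ because $u_{T}-u_{T^{c}}\in A_{i}$, and the other two lie in $J$ because they are monomial multiples of elements $\prod t_{j}^{q-1}-1\in\mathbf{I}(T^{km})\subseteq J$; hence $f\in J$, as required. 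Note that the argument never uses the size of $S$, only that some $T$ of size $\gamma$ exists, so a single $A_{i}$-generator reaches every $u_{S}-u_{S^{c}}$.

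The conceptual content sits in the middle paragraph: recognizing the $(\mathbb{Z}/2)^{k}$-structure on the classes $u_{S}$ modulo $\mathbf{I}(T^{km})$, which collapses the apparent dependence on the split $|S|$ versus $|S^{c}|$. The one place where a purely heuristic version of the argument could go wrong — and thus the \emph{main obstacle} to make rigorous — is ensuring that the symmetric-difference reductions are genuine ideal memberships rather than mere equalities of evaluation functions on the torus. That is exactly what the correction terms $\prod_{j\in C}t_{j}^{q-1}-1$ account for, so the small telescoping lemma placing each such term in $\langle\{t_{j}^{q-1}-1\}\rangle$ is the technical hinge on which the whole decomposition turns.
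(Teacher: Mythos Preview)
Your proof is correct and rests on the same mechanism as the paper's: multiply a single generator $u_{T}-u_{T^{c}}\in A_{i}$ by an appropriate monomial $u_{R}$, then absorb the discrepancy into the torus ideal via the telescoping identity $\prod_{j\in C}t_{j}^{q-1}-1\in\langle t_{j}^{q-1}-1\rangle$.

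The difference is in packaging. The paper splits into cases according to whether $|\mathrm{supp}(b)|>\gamma$ or $|\mathrm{supp}(a)|>\gamma$ and, in the first case, takes $T=\{b_{1},\ldots,b_{\gamma}\}\subseteq\mathrm{supp}(b)$ with multiplier $u_{R}$ for $R=\{b_{\gamma+1},\ldots,b_{r}\}$; because this $R$ is disjoint from $T$, one of your two correction terms $\prod_{j\in R\cap T}t_{j}^{q-1}-1$ is identically zero, leaving a slightly shorter formula. Your version instead identifies the $(\mathbb{Z}/2\mathbb{Z})^{k}$-structure $u_{R}u_{T}\equiv u_{R\triangle T}$ modulo $J$, which lets you take \emph{any} $T$ of size $\gamma$ and set $R=S\triangle T$; this eliminates the case distinction entirely and makes transparent why the size of $S$ is irrelevant. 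Both arguments are equally rigorous; yours is more conceptual and uniform, while the paper's choice of $T$ buys a marginally simpler explicit decomposition.
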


\begin{proof}
Let $J=\left\langle \left\{t_{i}^{q-1}-1\right\}_{i=1}^{km}\bigcup \left( \displaystyle\bigcup_{i=1}^{m}{A_i} \right) \right\rangle$. We can write 
$f$ as: 

\begin{center}

$f=t_{a_{1}}^{\frac{q-1}{2}} \cdots t_{a_{n}}^{\frac{q-1}{2}}-t_{b_{1}}^{\frac{q-1}{2}} \cdots t_{b_{r}}^{\frac{q-1}{2}}$,

\end{center}

\noindent where $t^{a}=t_{a_{1}}^{\frac{q-1}{2}} \cdots t_{a_{n}}^{\frac{q-1}{2}}$ and $t^{b}=t_{b_{1}}^{\frac{q-1}{2}} \cdots t_{b_{r}}^{\frac{q-1}{2}}$. 
By hypothesis we have that $\mathrm{supp}(a) \cup \mathrm{supp}(b)=\{(i-1)k+1,\ldots,ik\}$, therefore $n+r=k$. Suppose that $r> \gamma$. Let 
$r=\gamma +\beta$. On the other hand we have: 

\begin{center}
$-t_{b_{\gamma+1}}^{\frac{q-1}{2}} \cdots t_{b_{\gamma +\beta}}^{\frac{q-1}{2}}[t_{b_{1}}^{\frac{q-1}{2}} \cdots t_{b_{\gamma}}^{\frac{q-1}{2}}-(t_{b_{\gamma+1}}^{\frac{q-1}{2}} \cdots t_{b_{\gamma +\beta}}^{\frac{q-1}{2}})t^{a}]=-t^{b}+t^{a}(t_{b_{\gamma+1}}^{q-1} \cdots t_{b_{\gamma +\beta}}^{q-1})$, 
\end{center}

\noindent note that $\beta+n=\gamma+1$, therefore $t_{b_{1}}^{\frac{q-1}{2}} \cdots t_{b_{\gamma}}^{\frac{q-1}{2}}-(t_{b_{\gamma+1}}^{\frac{q-1}{2}} \cdots t_{b_{\gamma +\beta}}^{\frac{q-1}{2}})t^{a} \in{J}$. We can write $m'=t^{a}(t_{b_{\gamma+1}}^{q-1} \cdots t_{b_{\gamma +\beta}}^{q-1})$ as 
$m'=t^{a}(t_{b_{\gamma+2}}^{q-1} \cdots t_{b_{\gamma +\beta}}^{q-1})[t_{b_{\gamma+1}}^{q-1}-1]+t^{a}(t_{b_{\gamma+2}}^{q-1} \cdots t_{b_{\gamma +\beta}}^{q-1})$. 
If we do the same with the term $t^{a}(t_{b_{\gamma+2}}^{q-1} \cdots t_{b_{\gamma +\beta}}^{q-1})$ and we continue, we get: 

\begin{center}
$m'=g+t^{a}$,
\end{center}

\noindent where $g\in{\mathbf{I}(T^{km})}$. Then $-t_{b_{\gamma+1}}^{\frac{q-1}{2}} \cdots t_{b_{\gamma +\beta}}^{\frac{q-1}{2}}[t_{b_{1}}^{\frac{q-1}{2}} \cdots t_{b_{\gamma}}^{\frac{q-1}{2}}-(t_{b_{\gamma+1}}^{\frac{q-1}{2}} \cdots t_{b_{\gamma +\beta}}^{\frac{q-1}{2}})t^{a}]=-t^{b}+g+t^{a}$, therefore
$f=-t_{b_{\gamma+1}}^{\frac{q-1}{2}} \cdots t_{b_{\gamma +\beta}}^{\frac{q-1}{2}}[t_{b_{1}}^{\frac{q-1}{2}} \cdots t_{b_{\gamma}}^{\frac{q-1}{2}}-(t_{b_{\gamma+1}}^{\frac{q-1}{2}} \cdots t_{b_{\gamma +\beta}}^{\frac{q-1}{2}})t^{a}]-g$. It follows that $f\in{J}$. If $r\leq \gamma$, it is easy to see that 
$n> \gamma$, then we use the same proof as above.\end{proof}

\begin{lemma}
\label{square-lemma}
Let $f\in{\mathbf{I}(X_{\mathcal{G}}^{\ast})}$, then $f(a_{1}^{2},\ldots,a_{km}^{2})=0$ for all $a=(a_{1},\ldots,a_{km})\in{(\mathbb{F}_{q}^{\ast})^{km}}$. 
\end{lemma}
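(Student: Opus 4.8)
The plan is to reduce the lemma to the single assertion that every ``squared'' point $(a_1^2,\ldots,a_{km}^2)$ already lies in $X_{\mathcal{G}}^{\ast}$; once this is shown the conclusion is immediate, since $f$ vanishes on all of $X_{\mathcal{G}}^{\ast}$ by hypothesis. Because the $m$ components of $\mathcal{G}$ use disjoint blocks of vertices and contribute disjoint blocks of coordinates, it suffices to treat one component at a time and then concatenate the resulting preimages. So I would fix a component, say the $i$-th $k$-cycle on the vertices $x_{(i-1)k+1},\ldots,x_{ik}$, and seek $u_1,\ldots,u_k\in\mathbb{F}_q^{\ast}$ mapping under $\theta$ to the corresponding block $(c_1^2,\ldots,c_k^2)$ of the squared point, where $c_1,\ldots,c_k$ denote the coordinates of $a$ indexed by the edges of that component.

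Concretely, writing $u_j=x_{(i-1)k+j}$, the condition that $(u_1,\ldots,u_k)$ be a preimage is the cyclic system $u_ju_{j+1}=c_j^2$ for $j=1,\ldots,k-1$ together with the closing equation $u_ku_1=c_k^2$. I would solve the open part by the recurrence $u_{j+1}=c_j^2/u_j$, which expresses every $u_j$ as an explicit monomial in $c_1,\ldots,c_{j-1}$ times $u_1^{\pm1}$; since all $c_j\in\mathbb{F}_q^{\ast}$, every $u_j$ stays in $\mathbb{F}_q^{\ast}$. Substituting into the closing equation and using $k=2\gamma+1$, all the dependence collapses to
\[
u_1^2=\Bigl(c_k\prod_{j=1}^{\gamma}\frac{c_{2j-1}}{c_{2j}}\Bigr)^{2},
\]
so the choice $u_1=c_k\prod_{j=1}^{\gamma}c_{2j-1}/c_{2j}\in\mathbb{F}_q^{\ast}$ together with the recurrence produces a genuine preimage in $(\mathbb{F}_q^{\ast})^k$. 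This shows the block $(c_1^2,\ldots,c_k^2)$ lies in the image of $\theta$, and concatenating over $i=1,\ldots,m$ yields $(a_1^2,\ldots,a_{km}^2)\in X_{\mathcal{G}}^{\ast}$.

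The step I expect to be the crux is the consistency of the closing equation. For an open path the recurrence always solves with no constraint, but the cycle imposes one extra equation, and it is exactly the oddness of $k$ that forces this equation to take the form ``$u_1^2$ equals a perfect square,'' which is always solvable over $\mathbb{F}_q^{\ast}$; for an even $k$ the same substitution would instead impose a nontrivial relation on the $c_j$, and the statement would fail. Alternatively, one could avoid the explicit recurrence by observing that $\prod_j c_j^2=(\prod_j c_j)^2$ is automatically a square and that, by the length count in the Corollary, the image of $\theta$ on an odd cycle is precisely the set of tuples with square product; I would favor the direct construction to keep the argument self-contained.
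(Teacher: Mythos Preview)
Your proposal is correct and follows essentially the same approach as the paper: both arguments reduce to showing $(a_1^2,\ldots,a_{km}^2)\in X_{\mathcal G}^{\ast}$, treat each odd cycle separately, and construct a preimage by solving the cyclic system $u_ju_{j+1}=c_j^2$ via the recurrence $u_{j+1}=c_j^2/u_j$ with an explicit choice of $u_1$. In fact your starting value $u_1=c_k\prod_{j=1}^{\gamma}c_{2j-1}/c_{2j}$ coincides with the paper's choice $x_{(i-1)k+1}=a_{ik}a_{ik-1}^{-1}\cdots a_{(i-1)k+1}$, so the two constructions are identical; your additional remarks on why oddness of $k$ is essential and the alternative via the length count go slightly beyond what the paper records.
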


\begin{proof}
Let $i\in{\{0,1,\ldots,m-1\}}$ and  $\bar{a}=(a_{ik+1}^{2},a_{ik+2}^{2},\ldots,a_{k(i+1)}^{2})$. Let $\alpha_{j}=a_{ik+j}^{2}$ for 
$j=1,\ldots,k$. Let: 

\begin{center}
$x_{ik+2}=\alpha_{1}x_{ik+1}^{-1}$,

\medskip

$x_{ik+3}=\alpha_{2}\alpha_{1}^{-1}x_{ik+1}$, 

\medskip

$x_{ik+4}=\alpha_{3}\alpha_{2}^{-1}\alpha_{1}x_{ik+1}^{-1}$,

\medskip

$\vdots$

\medskip

$x_{ik+k}=\alpha_{k-1}\alpha_{k-2}^{-1} \cdots \alpha_{1}x_{ik+1}$,

\end{center}

\noindent where $x_{ik+1}=a_{ik+k}a_{ik+k-1}^{-1} \cdots a_{ik+1}$. It is easy to see that 
$\bar{a}=(x_{ik+1}x_{ik+2}$ $,x_{ik+2}x_{ik+3},$ $\ldots,x_{ik+k-1}x_{ik+k}$ $,x_{ik+k}x_{ik+1})$. It follows that $(a_{1}^{2},\ldots,a_{km}^{2})\in{X_{\mathcal{G}}^{\ast}}$,
then $f(a_{1}^{2},\ldots,a_{km}^{2})=0$.\end{proof}

\begin{lemma}
Let $i\in{ \{1,\ldots,m \} }$ and $f=t^{a}-t^{b}\in{\mathbf{I}(X_{C_{i}^{k}}^{\ast}) \setminus \{0\} }$ with  $\mathrm{supp}(a) \cap \mathrm{supp}(b)=\emptyset$. Suppose that $\deg(f)_{t_{i}} < q-1$ for all $i$ and $\emptyset \neq \mathrm{supp}(a) \subsetneq \{(i-1)k+1,\ldots,ik\} $. 
Then $\mathrm{supp}(b) \neq \emptyset$. 
\end{lemma}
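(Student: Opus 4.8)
The plan is to argue by contradiction. Suppose $\mathrm{supp}(b)=\emptyset$, so that $t^{b}=1$ and $f=t^{a}-1$; I will contradict the hypotheses $\emptyset\neq\mathrm{supp}(a)\subsetneq\{(i-1)k+1,\ldots,ik\}$ and $\deg(f)_{t_{j}}<q-1$ for all $j$. After shifting the index by $(i-1)k$ I relabel the edge variables of $C_{i}^{k}$ as $t_{1},\ldots,t_{k}$ and write $a=(a_{1},\ldots,a_{k})$; the degree bound then reads $0\le a_{j}\le q-2$ for every $j$, while the support hypotheses say that some $a_{j}$ is nonzero and some $a_{j}$ is zero.

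First I would convert membership in $\mathbf{I}(X_{C_{i}^{k}}^{\ast})$ into an arithmetic condition on $a$. A generic point of $X_{C_{i}^{k}}^{\ast}$ is $\theta(x)=(x_{1}x_{2},x_{2}x_{3},\ldots,x_{k-1}x_{k},x_{k}x_{1})$ with $x_{j}\in K^{\ast}$, so, reading indices cyclically,
\[
t^{a}(\theta(x))=\prod_{j=1}^{k}(x_{j}x_{j+1})^{a_{j}}=\prod_{j=1}^{k}x_{j}^{\,a_{j-1}+a_{j}} .
\]
Since $f=t^{a}-1$ lies in $\mathbf{I}(X_{C_{i}^{k}}^{\ast})$ exactly when this monomial in $x_{1},\ldots,x_{k}$ is identically $1$ on $(K^{\ast})^{k}$, and since $K^{\ast}=\mathbb{F}_{q}^{\ast}$ is cyclic of order $q-1$, specializing all but one $x_{j}$ to $1$ forces $a_{j-1}+a_{j}\equiv 0\pmod{q-1}$ for every $j$.

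Next I would solve this cyclic system. The relations give $a_{j}\equiv(-1)^{j-1}a_{1}\pmod{q-1}$, and comparing the wrap-around relation $a_{k}+a_{1}\equiv0$ with $a_{k}\equiv(-1)^{k-1}a_{1}$ yields $2a_{1}\equiv0\pmod{q-1}$, precisely because $k=2\gamma+1$ is odd. This is the crux of the argument and the only place where the oddness of the cycle is used; everything else is bookkeeping. Since $2\mid q-1$ and $0\le a_{1}\le q-2$, it follows that $a_{1}\in\{0,(q-1)/2\}$, and likewise each $a_{j}\in\{0,(q-1)/2\}$.

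Finally I would feed this back into $a_{j-1}+a_{j}\equiv0\pmod{q-1}$: if $a_{j}=0$ then $a_{j-1}=0$, while if $a_{j}=(q-1)/2$ then $a_{j-1}=(q-1)/2$, so $a_{j-1}=a_{j}$ for all $j$ and all coordinates of $a$ coincide. Thus either $a=0$, contradicting $\mathrm{supp}(a)\neq\emptyset$, or $a_{j}=(q-1)/2$ for every $j$, contradicting $\mathrm{supp}(a)\subsetneq\{(i-1)k+1,\ldots,ik\}$. In both cases we reach a contradiction, so $\mathrm{supp}(b)\neq\emptyset$.
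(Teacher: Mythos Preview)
Your proof is correct, but the route is genuinely different from the paper's. The paper first invokes Lemma~\ref{square-lemma} (every point of the form $(a_{1}^{2},\ldots,a_{k}^{2})$ lies in $X_{C_{i}^{k}}^{\ast}$) and then applies the Combinatorial Nullstellensatz over the set of squares $S=\{\alpha^{2}:\alpha\in K^{\ast}\}$ to force each nonzero exponent to equal $(q-1)/2$; afterwards it evaluates $f$ at carefully chosen points $(\ldots,1,\alpha,\alpha,1,\ldots)$ with $\alpha\notin S$ to argue, edge by edge along the cycle, that $\mathrm{supp}(a)$ cannot omit any index. You instead pull back $t^{a}$ along the parameterization $\theta$, read off the exponent of each vertex variable as $a_{j-1}+a_{j}$, and use only the cyclicity of $K^{\ast}$ to obtain the congruences $a_{j-1}+a_{j}\equiv 0\pmod{q-1}$; the oddness of $k$ then gives $2a_{1}\equiv 0$, and the degree bound $0\le a_{j}\le q-2$ pins everything down. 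Your argument is more elementary and entirely self-contained (no Combinatorial Nullstellensatz, no auxiliary square lemma), and it makes the role of the odd cycle length completely transparent; the paper's approach, on the other hand, recycles tools (Lemma~\ref{square-lemma} and Theorem~\ref{com-null}) that it also needs for the neighboring propositions, so within the paper's economy it is natural.
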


\begin{proof}
Suppose that $\mathrm{supp}(b) = \emptyset$. Then we get that $f=t^{a}-1$. Let $n=\left|\mathrm{supp}(a)\right|$.
We can write $f$ as:

\begin{center}

$f(t_{\alpha_{1}},\ldots,t_{\alpha_{n}})=t_{\alpha_{1}}^{a_{1}} \cdots t_{\alpha_{n}}^{a_{n}}-1$,

\end{center}

\noindent where $t^{a}=t_{\alpha_{1}}^{a_{1}} \cdots t_{\alpha_{n}}^{a_{n}}$. First we will prove that $a_{i}=\frac{q-1}{2}$ for all $i$. 
Let $\beta \in{\mathbb{F}_{q}^{\ast}}$. By the last lemma we get that $f(\beta^{2},1,\ldots,1)=(\beta^{2})^{a_{1}}-1=0$.
Let $h(t_{\alpha_{1}})=t_{\alpha_{1}}^{a_{1}}-1$, Let $S=\{\alpha^{2} \mid \alpha\in{\mathbb{F}_{q}^{\ast}}\}$. 
It is easy to see that $\left|S\right|=\frac{q-1}{2}$, then $h$ vanishes on $S$. Now we are going to examine 
the following cases: 

\begin{itemize}

\item Suppose that $a_{1} <\frac{q-1}{2}$. By Combinatorial Nullstellensatz we get that $h=0$ and this is a contradiction because $a_{1} >0$.  

\item Suppose that $a_{1}>\frac{q-1}{2}$. Let $a_{1}=\frac{q-1}{2}+i_{1}$ where $0 < i_{1} < \frac{q-1}{2}$. Let 
$h'(t_{\alpha_{1}})=t_{\alpha_{1}}^{i_{1}}-1$, it is clear that $h'=h$ over $S$, then $h'$ vanishes on $S$, 
by Combinatorial Nullstellensatz we get that $h'=0$ and this is a contradiction. 

\end{itemize}

Therefore the only possible case is when  $a_{1}=\frac{q-1}{2}$. In this way we can prove that $a_{i}=\frac{q-1}{2}$ for all $i$. 
Let $\alpha\in{\mathbb{F}_{q}^{\ast} \setminus S}$. Now we are going to examine the following cases: 

\begin{itemize}

\item [1)] Suppose that $(i-1)k+1 \in{\mathrm{supp}(a)}$. If $(i-1)k+2 \notin{\mathrm{supp}(a)}$, then $( \alpha,\alpha ,1,\ldots,1)\in{X_{C_{i}^{k}}^{\ast}}$, 
therefore $f( \alpha,\alpha ,1,\ldots,1)=(\alpha)^{\frac{q-1}{2}}-1=0$ and this is a contradiction because $\alpha \notin{S}$. 
Suppose that $(i-1)k+2 \in{\mathrm{supp}(a)}$, if $(i-1)k+3 \notin{\mathrm{supp}(a)}$, then $(1 ,\alpha, \alpha ,1,\ldots,1)\in{X_{C_{i}^{k}}^{\ast}}$, 
thus $f(1 ,\alpha, \alpha ,1,\ldots,1)=(\alpha)^{\frac{q-1}{2}}-1=0$, and this is a contradiction. If we continue, we get that 
$(i-1)k+1,\ldots,ik-1 \in{\mathrm{supp}(a)}$, then $ik \notin{\mathrm{supp}(a)}$ because $\mathrm{supp(a)} \subsetneq \{(i-1)k+1,\ldots,ik\} $, then 
$(1 ,\ldots,1,\alpha,\alpha )\in{X_{C_{i}^{k}}^{\ast}}$, and 
$f(1 ,\ldots,1,\alpha,\alpha )=(\alpha)^{\frac{q-1}{2}}-1=0$ which is a contradiction.  

\item [2)] Suppose that $(i-1)k+1 \notin{\mathrm{supp}(a)}$. If $(i-1)k+2 \in{\mathrm{supp}(a)}$, then $(\alpha,\alpha,1,\ldots,1)\in{X_{C_{i}^{k}}^{\ast}}$, 
then $f(\alpha,\alpha,1,\ldots,1)=(\alpha)^{\frac{q-1}{2}}-1=0$ and this is a contradiction because $\alpha \notin{S}$. If we continue as in case $1)$, 
we get a contradiction.   

\end{itemize}

It follows that $\mathrm{supp}(b) \neq \emptyset$.\end{proof}

\begin{remark}
\label{main-remark}
Let $i\in{ \{1,\ldots,m \} }$ and $f=t^{a}-t^{b}\in{\mathbf{I}(X_{C_{i}^{k}}^{\ast}) \setminus \{0\} }$ with  $\mathrm{supp}(a) \cap \mathrm{supp}(b)=\emptyset$. 
Suppose that $\deg(f)_{t_{i}} < q-1$ for all $i$. If $\mathrm{supp}(a) = \{(i-1)k+1,\ldots,ik\} $, then $\mathrm{supp}(b) = \emptyset$. 
If we follow the proof of the last lemma, we can prove that all the coordinates of $a$ are equal to $\frac{q-1}{2}$. Therefore: 

\begin{center}
$f=t_{(i-1)k+1}^{\frac{q-1}{2}} \cdots t_{ik}^{\frac{q-1}{2}} -1$. 
\end{center}

Let $h=t_{(i-1)k+1}^{\frac{q-1}{2}} \cdots t_{ik}^{\frac{q-1}{2}}$ and
$h'=t_{(i-1)k+1}^{\frac{q-1}{2}} \cdots t_{(1-1)k+\gamma}^{\frac{q-1}{2}}-t_{(i-1)k+\gamma+1}^{\frac{q-1}{2}} \cdots t_{ik}^{\frac{q-1}{2}}$. $h'$ belongs to 
$J=\left\langle \left\{t_{i}^{q-1}-1\right\}_{i=1}^{km}\bigcup \left( \displaystyle\bigcup_{i=1}^{m}{A_i} \right) \right\rangle$. On the other hand 
we  have: 

\begin{center}
$t_{(i-1)k+\gamma+1}^{\frac{q-1}{2}} \cdots t_{ik}^{\frac{q-1}{2}}h'=h - t_{(i-1)k+\gamma+1}^{q-1} \cdots t_{ik}^{q-1}$. 
\end{center} 

Note that $t_{(i-1)k+\gamma+1}^{q-1} \cdots t_{ik}^{q-1}=t_{(i-1)k+\gamma+2}^{q-1} \cdots t_{ik}^{q-1}(t_{(i-1)k+\gamma+1}^{q-1} -1)+t_{(i-1)k+\gamma+2}^{q-1} \cdots t_{ik}^{q-1}$. If we do the same with the term $t_{(i-1)k+\gamma+2}^{q-1} \cdots t_{ik}^{q-1}$ and we continue, we obtain: 

\begin{center}
$t_{(i-1)k+\gamma+1}^{q-1} \cdots t_{ik}^{q-1}=g+1$,
\end{center}

\noindent where $g\in{\mathbf{I}(T^{km})}$. Therefore $t_{(i-1)k+\gamma+1}^{\frac{q-1}{2}} \cdots t_{ik}^{\frac{q-1}{2}}h'=h-(g+1)=f-g$, then 
$f=t_{(i-1)k+\gamma+1}^{\frac{q-1}{2}} \cdots$ $ t_{ik}^{\frac{q-1}{2}}h'+g$. It follows that $f\in{J}$.\end{remark}

\begin{proposition}
Let $i\in{ \{1,\ldots,m \} }$ and $f=t^{a}-t^{b}\in{\mathbf{I}(X_{C_{i}^{k}}^{\ast}) \setminus \{0\} }$ with  $\mathrm{supp}(a) \cap \mathrm{supp}(b)=\emptyset$. Suppose that $\deg(f)_{t_{i}} < q-1$ for all $i$ and $\emptyset \neq \mathrm{supp}(a) \subsetneq \{(i-1)k+1,\ldots,ik\} $. 
Then  all the coordinates of $a$ and $b$ are equal to $\frac{q-1}{2}$.  
\end{proposition}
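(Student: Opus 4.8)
The plan is to pin down each exponent of $a$ and $b$ one variable at a time, reusing the mechanism from the first half of the preceding lemma. After relabeling the $i$-th component's variables as $t_1,\ldots,t_k$, write $f=t^a-t^b$ with disjoint supports and each exponent strictly between $0$ and $q-1$ on its support. The driving fact is Lemma~\ref{square-lemma}, which applied to a single cycle yields $(\alpha_1^2,\ldots,\alpha_k^2)\in X_{C_i^k}^{\ast}$ for every $(\alpha_1,\ldots,\alpha_k)\in(\mathbb{F}_q^{\ast})^k$; consequently $f$ vanishes at every point all of whose coordinates are squares.

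Fix $j\in\mathrm{supp}(a)$. First I would evaluate $f$ at the point carrying $\beta^2$ in the $j$-th slot and $1$ everywhere else, which lies in $X_{C_i^k}^{\ast}$ by the squaring property above. Since $\mathrm{supp}(a)\cap\mathrm{supp}(b)=\emptyset$ forces $b_j=0$, the monomial $t^b$ evaluates to $1$ while $t^a$ evaluates to $(\beta^2)^{a_j}$, so that $(\beta^2)^{a_j}-1=0$ for all $\beta\in\mathbb{F}_q^{\ast}$. Equivalently, the univariate polynomial $h(t)=t^{a_j}-1$ vanishes on the set of nonzero squares $S=\{\beta^2\mid\beta\in\mathbb{F}_q^{\ast}\}$, where $|S|=\tfrac{q-1}{2}$.

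Then I would close with the same dichotomy used earlier. If $a_j<\tfrac{q-1}{2}=|S|$, then $h$ has positive degree $a_j$ with nonzero leading coefficient yet vanishes on all of $S$, contradicting the Combinatorial Nullstellensatz (Theorem~\ref{com-null}). If $a_j>\tfrac{q-1}{2}$, I would write $a_j=\tfrac{q-1}{2}+i_j$ with $0<i_j<\tfrac{q-1}{2}$, the upper bound coming from $\deg(f)_{t_j}<q-1$; since $s^{(q-1)/2}=1$ for every $s\in S$, the polynomial $h$ agrees on $S$ with $t^{i_j}-1$, which has degree below $|S|$ and nonzero leading term, so Theorem~\ref{com-null} is again violated. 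Hence $a_j=\tfrac{q-1}{2}$, and repeating the argument for each $j\in\mathrm{supp}(a)$ and then, symmetrically, for each $j\in\mathrm{supp}(b)$ gives the claim.

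The step I expect to carry the whole argument is the isolation in the second paragraph: it is exactly the disjointness $\mathrm{supp}(a)\cap\mathrm{supp}(b)=\emptyset$ that allows setting all but one coordinate equal to $1$ to annihilate one monomial and reduce $f$ to a genuine univariate binomial, and it is the bound $\deg(f)_{t_j}<q-1$ that keeps the residual exponent $i_j$ below $\tfrac{q-1}{2}$ so that the Combinatorial Nullstellensatz applies in the overshoot case. Once the isolation is set up correctly, the rest is the short root-counting already rehearsed in the previous lemma.
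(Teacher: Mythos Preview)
Your argument is correct and is in fact a cleaner variant of what the paper does. The paper picks one index $\alpha_1\in\mathrm{supp}(a)$ and one index $w_1\in\mathrm{supp}(b)$ simultaneously, evaluates at $(\beta^2,1,\ldots,1,\mu^2,1,\ldots,1)$, and then applies the Combinatorial Nullstellensatz to the \emph{bivariate} binomial $h(t_{\alpha_1},t_{w_1})=t_{\alpha_1}^{a_1}-t_{w_1}^{b_1}$ on $S\times S$; this forces a case split according to whether each of $a_1,b_1$ is below, above, or equal to $\tfrac{q-1}{2}$. You instead fix a single index $j$ and set every other coordinate to $1$, which kills $t^b$ outright (thanks to the disjoint-support hypothesis) and reduces immediately to the univariate binomial $t^{a_j}-1$ on $S$---precisely the situation already dispatched in the preceding lemma. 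Your route avoids the multi-case analysis entirely; the paper's two-variable setup gains nothing, since it ultimately collapses to the same univariate reductions anyway.
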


\begin{proof}
By hypothesis we have that $\mathrm{supp}(a)\neq \emptyset$, let $n=\left|\mathrm{supp}(a)\right|$. 
As $\mathrm{supp}(a) \subsetneq \{(i-1)k+1,\ldots,ik\} $, then $\mathrm{supp}(b) \neq \emptyset$, let $r=\left|\mathrm{supp}(b)\right|$. 
We can write $f$ as:

\begin{center}

$f(t_{\alpha_{1}},\ldots,t_{\alpha_{n}},t_{w_{1}},\ldots,t_{w_{r}})=t_{\alpha_{1}}^{a_{1}} \cdots t_{\alpha_{n}}^{a_{n}}-t_{w_{1}}^{b_{1}} \cdots t_{w_{r}}^{b_{r}}$,

\end{center}

\noindent where $t^{a}=t_{\alpha_{1}}^{a_{1}} \cdots t_{\alpha_{n}}^{a_{n}}$ and $t^{b}=t_{w_{1}}^{b_{1}} \cdots t_{\alpha_{r}}^{b_{r}}$. Let 
$(\beta,\mu)\in{(\mathbb{F}_{q}^{\ast})^{2}}$. By Lemma~\ref{square-lemma} we get that 
$f(\beta^{2},1,\ldots,1,\mu^{2},1,\ldots,1)=(\beta^{2})^{a_{1}}-(\mu^{2})^{w_{1}}=0$.
Let $h(t_{\alpha_{1}},t_{w_{1}})=t_{\alpha_{1}}^{a_{1}}-t_{w_{1}}^{b_{1}}$, Let $S=\{\alpha^{2} \mid \alpha\in{\mathbb{F}_{q}^{\ast}}\}$. 
It is easy to see that $\left|S\right|=\frac{q-1}{2}$, then $h$ vanishes on $S^{2}$. Now we are going to examine 
the following cases: 

\begin{itemize}

\item Suppose that $a_{1},b_{1} <\frac{q-1}{2}$. By Combinatorial Nullstellensatz we get that $h=0$ and this is a contradiction because $a_{1},b_{1} >0$.  

\item Suppose that $b_{1} <\frac{q-1}{2}$ and $a_{1}>\frac{q-1}{2}$. Let $a_{1}=\frac{q-1}{2}+i_{1}$ where $0 < i_{1} < \frac{q-1}{2}$. Let 
$h'(t_{\alpha_{1}},t_{w_{1}})=t_{\alpha_{1}}^{i_{1}}-t_{w_{1}}^{b_{1}}$, it is clear that $h'=h$ over $S^{2}$, then $h'$ vanishes on $S^{2}$, 
by Combinatorial Nullstellensatz we get that $h'=0$ and this is a contradiction. It is very similar the case $a_{1} <\frac{q-1}{2}$ and $b_{1}>\frac{q-1}{2}$.

\item Suppose that $a_{1},b_{1} >\frac{q-1}{2}$. Let $a_{1}=\frac{q-1}{2}+i_{1}$ and $b_{1}=\frac{q-1}{2}+i_{2}$ where $0 < i_{1},i_{2} < \frac{q-1}{2}$.     
Let $h'(t_{\alpha_{1}},t_{w_{1}})=t_{\alpha_{1}}^{i_{1}}-t_{w_{1}}^{i_{2}}$, it is clear that $h'=h$ over $S^{2}$, then $h'$ vanishes on $S^{2}$, 
by Combinatorial Nullstellensatz we get that $h'=0$ and this is a contradiction.

\item Suppose that $b_{1} <\frac{q-1}{2}$ and $a_{1}=\frac{q-1}{2}$. Then $h(t_{\alpha_{1}},t_{w_{1}})=t_{\alpha_{1}}^{\frac{q-1}{2}}-t_{w_{1}}^{b_{1}}$. 
Let $h'(t_{w_{1}})=1-t_{w_{1}}^{b_{1}}$, it is clear that $h'=h$ over $S^{2}$, then $h'$ vanishes on $S$, 
by Combinatorial Nullstellensatz we get that $h'=0$ and this is a contradiction because $b_{1} >0$. It is very similar the case 
$0 < a_{1} <\frac{q-1}{2}$ and $b_{1}=\frac{q-1}{2}$.

\end{itemize}

As a result, the only possible case is when  $b_{1}=a_{1}=\frac{q-1}{2}$. In this way we can prove that 
$a_{i}=\frac{q-1}{2}$ for all $i$ and $b_{j}=\frac{q-1}{2}$ for all $j$.\end{proof}

\begin{proposition}
\label{supp-proposition}
Let $i\in{ \{1,\ldots,m \} }$ and $f=t^{a}-t^{b}\in{\mathbf{I}(X_{C_{i}^{k}}^{\ast}) \setminus \{0\} }$ with  $\mathrm{supp}(a) \cap \mathrm{supp}(b)=\emptyset$. Suppose that $\deg(f)_{t_{i}} < q-1$ for all $i$. Then $\mathrm{supp}(a) \cup \mathrm{supp}(b)=\{(i-1)k+1,\ldots,ik\}$.
\end{proposition}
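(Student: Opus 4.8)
The plan is to argue by contradiction, first reducing to the case in which every nonzero exponent of $f$ equals $\frac{q-1}{2}$, and then exhibiting an explicit point of $X_{C_i^k}^{\ast}$ on which $f$ fails to vanish. Throughout I relabel so that $i=1$ and the variables involved are $t_1,\dots,t_k$, with the structural map sending vertex values $(x_1,\dots,x_k)\in(K^{\ast})^k$ to $(x_1x_2,\,x_2x_3,\dots,x_{k-1}x_k,\,x_kx_1)$. Write $A=\mathrm{supp}(a)$, $B=\mathrm{supp}(b)$ and $C=A\cup B$ (a disjoint union). Suppose, for contradiction, that $C\neq\{1,\dots,k\}$ and fix an index $j_0\notin C$.

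First I would dispose of the degenerate supports. If $A=\emptyset$ then $f=1-t^{b}$, and applying the preceding Lemma (which forces $\mathrm{supp}(b)\neq\emptyset$ whenever $\mathrm{supp}(a)$ is nonempty and proper) to $-f=t^{b}-1$ rules out $\emptyset\neq B\subsetneq\{1,\dots,k\}$; since $B\neq\emptyset$, this makes $B=\{1,\dots,k\}$, contradicting $j_0\notin C$. Symmetrically $B=\emptyset$ is impossible. Hence both $A$ and $B$ are nonempty, and since $j_0\notin C$ both are proper subsets of $\{1,\dots,k\}$. The preceding Proposition then applies and shows that every coordinate of $a$ and of $b$ equals $\frac{q-1}{2}$, so that
\[
f=\prod_{j\in A}t_j^{\frac{q-1}{2}}-\prod_{j\in B}t_j^{\frac{q-1}{2}}.
\]

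Now fix an index $j_1\in C$ and a nonsquare $\sigma\in K^{\ast}$ (which exists because $2\mid q-1$). The crux is to build a point of $X_{C_1^k}^{\ast}$ whose coordinates are squares except at the two edges $j_0$ and $j_1$. Cutting the cycle at the edges $j_0$ and $j_1$ splits its vertices into two nonempty arcs; assigning the value $1$ to the vertices of one arc and $\sigma$ to those of the other gives vertex values $x_1,\dots,x_k\in\{1,\sigma\}$ for which the point $y=(x_1x_2,\dots,x_kx_1)\in X_{C_1^k}^{\ast}$ satisfies $y_{j_0}=y_{j_1}=\sigma$ while $y_j$ is a square for every $j\neq j_0,j_1$ (each non-cut edge joins two equal-type vertices, so its value is $1$ or $\sigma^2$). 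Note that the requirement $y\in X_{C_1^k}^{\ast}$ forces an even number of nonsquare edges via the relation $\prod_j y_j=(x_1\cdots x_k)^2$, which is exactly why I need the free index $j_0$ alongside $j_1$.

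Evaluating $f$ at $y$ closes the argument. Since $j_0\notin C$, the nonsquare coordinate $y_{j_0}$ appears in neither product, while $j_1$ lies in exactly one of $A$ or $B$; because $\sigma^{\frac{q-1}{2}}=-1$ and every other factor equals $1$, one of the two products equals $-1$ and the other equals $1$, whence $f(y)=\pm 2\neq 0$ (recall $q$ is odd, so $2\neq 0$ in $K$). This contradicts $f\in\mathbf{I}(X_{C_1^k}^{\ast})$, and therefore $C=A\cup B=\{(i-1)k+1,\dots,ik\}$. I expect the main obstacle to be the arc construction, i.e.\ realizing a prescribed square/nonsquare pattern on the edges through a choice of vertex values, together with checking that the reduction to exponents $\frac{q-1}{2}$ via the preceding Proposition indeed covers every nondegenerate configuration (the boundary cases being absorbed, as above, by the preceding Lemma and by Remark~\ref{main-remark}).
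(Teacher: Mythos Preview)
Your proof is correct and follows essentially the same approach as the paper's: reduce via the preceding lemma and proposition to the case where all nonzero exponents equal $\frac{q-1}{2}$, assume the union of supports is proper, and derive a contradiction by evaluating $f$ at a point of $X_{C_i^k}^{\ast}$ whose coordinates are squares except at one index inside $H=\mathrm{supp}(a)\cup\mathrm{supp}(b)$ and one outside. The only difference is the construction of the test point: the paper locates two \emph{adjacent} edge-indices, one in $H$ and one outside (which must exist since $H$ is nonempty and proper), and uses the point with $\alpha$ at those two positions and $1$ elsewhere, obtained by setting a single vertex to $\alpha$; you instead allow $j_0$ and $j_1$ to be non-adjacent and produce the point via a two-coloring of the arcs, which is a mild generalization of the same idea.
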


\begin{proof}
As $f$ is a nonzero polynomial then $\mathrm{supp}(a) \neq \emptyset $ or $\mathrm{supp}(b) \neq \emptyset$, Suppose that $\mathrm{supp}(a) \neq \emptyset $. 
If $\mathrm{supp}(a)=\{(i-1)k+1,\ldots,ik\}$, then $\mathrm{supp}(b)= \emptyset$, it is clear that $\mathrm{supp}(a) \cup \mathrm{supp}(b)=\{(i-1)k+1,\ldots,ik\}$.
Suppose that $\mathrm{supp}(a) \subsetneq \{(i-1)k+1,\ldots,ik\}$, then we know that $\mathrm{supp}(b) \neq \emptyset$ and all the coordinates of $a$ and $b$
are equal to $\frac{q-1}{2}$. Suppose that $H=\mathrm{supp}(a) \cup \mathrm{supp}(b) \subsetneq \{(i-1)k+1,\ldots,ik\}$. 
Let $S=\{\beta^{2} \mid \beta\in{K^{\ast}}\}$ and $\alpha\in{K^{\ast} \setminus S}$. Now we are going to examine the following 
cases: 

\begin{itemize}

\item [1)] Suppose that $(i-1)k+1 \in{H}$. If $(i-1)k+2 \notin{H}$, then $(\alpha,\alpha,1,\ldots,1)\in{X_{C_{i}^{k}}^{\ast}}$, 
therefore $f(\alpha,\alpha,1,\ldots,1)=0$. Then we show $(\alpha)^{\frac{q-1}{2}}=1$ and this is a contradiction because $\alpha \notin{S}$. 
Suppose that $(i-1)k+2 \in{H}$, if $(i-1)k+3 \notin{H}$, then $(1,\alpha,\alpha,1,\ldots,1)\in{X_{C_{i}^{k}}^{\ast}}$, 
therefore $f(1,\alpha,\alpha,1,\ldots,1)=0$, then we find $(\alpha)^{\frac{q-1}{2}}=1$ and this is a contradiction. 
If we continue we get that $(i-1)k+1,\ldots,ik-1 \in{H}$, then $ik \notin{H}$ because $H \subsetneq \{(i-1)k+1,\ldots,ik\} $, then 
$(1,\ldots,1,\alpha,\alpha)\in{X_{C_{i}^{k}}^{\ast}}$, and 
$f(1,\ldots,1,\alpha,\alpha)=0$ which is a contradiction.  

\item [2)] Suppose that $(i-1)k+1 \notin{H}$. If $(i-1)k+2 \in{H}$, then $(\alpha,\alpha,1,\ldots,1)\in{X_{C_{i}^{k}}^{\ast}}$, 
hence $f(\alpha,\alpha,1,\ldots,1)=0$, then we obtain $(\alpha)^{\frac{q-1}{2}}=1$  and this is a contradiction because $\alpha \notin{S}$.
If we continue as in case $1)$, we get a contradiction.   

\end{itemize}

Then $H=\{(i-1)k+1,\ldots,ik\}$. The proof is very similar if we suppose that $\mathrm{supp}(b) \neq \emptyset$.\end{proof}

\bigskip

\begin{theorem}
\label{vanish-ideal-main-theorem}
Let $\mathcal{G}$ be a graph with $m$ connected components, suppose that each component is a $k$-cycle with $k=2\gamma+1$.
The vanishing ideal of $X_{\mathcal{G}}^{\ast}$ is given by: 

\begin{center}
$\mathbf{I}(X_{\mathcal{G}}^{\ast})=\left\langle \left\{t_{i}^{q-1}-1\right\}_{i=1}^{km}\bigcup \left( \displaystyle\bigcup_{i=1}^{m}{A_i} \right) \right\rangle$.
\end{center}

\end{theorem}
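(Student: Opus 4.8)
The plan is to establish the two inclusions $J \subseteq \mathbf{I}(X_{\mathcal{G}}^{\ast})$ and $\mathbf{I}(X_{\mathcal{G}}^{\ast}) \subseteq J$ separately, writing $J$ for the ideal on the right-hand side of the asserted equality. The first inclusion is immediate from the preliminary material: each generator $t_{i}^{q-1}-1$ lies in $\mathbf{I}(T^{km}) \subseteq \mathbf{I}(X_{\mathcal{G}}^{\ast})$ because $X_{\mathcal{G}}^{\ast} \subseteq T^{km}$, and each set $A_{i}$ is contained in $\mathbf{I}(X_{\mathcal{G}}^{\ast})$ by Lemma~\ref{lemma-contain}.

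For the reverse inclusion I would first invoke the result of \cite{affine-codes} that $\mathbf{I}(X_{\mathcal{G}}^{\ast})$ is generated by binomials, so that it suffices to prove $f \in J$ for every binomial $f = t^{a}-t^{b} \in \mathbf{I}(X_{\mathcal{G}}^{\ast})$; by the observations in Section~\ref{prelim-codes-graphs} I may assume $\mathrm{supp}(a) \cap \mathrm{supp}(b) = \emptyset$. Applying Proposition~\ref{prop-binomial} I write $f = g + f'$ with $g \in \mathbf{I}(T^{km}) \subseteq J$ and $f' = t^{a'}-t^{b'}$ a binomial of disjoint supports whose degree in each variable is strictly less than $q-1$; by the Remark following that proposition, $f' \in \mathbf{I}(X_{\mathcal{G}}^{\ast})$. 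Since $g \in J$, it remains to prove $f' \in J$, so I have reduced to the case $\deg(f)_{t_{i}} < q-1$ for all $i$.

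The crucial step is to pass from the product $X_{\mathcal{G}}^{\ast} = X_{C_{1}^{k}}^{\ast} \times \cdots \times X_{C_{m}^{k}}^{\ast}$ (valid because the components are vertex-disjoint, so the edge-coordinates of block $i$ depend only on the vertices of that block) down to its factors. Writing $a^{(i)}, b^{(i)}$ for the restrictions of $a', b'$ to the variables $t_{(i-1)k+1},\ldots,t_{ik}$ and setting $f_{i} = t^{a^{(i)}} - t^{b^{(i)}}$, I claim each $f_{i} \in \mathbf{I}(X_{C_{i}^{k}}^{\ast})$. Indeed, at a point $(p_{1},\ldots,p_{m})$ of the product every monomial factors over the blocks, so the vanishing of $f'$ reads $\prod_{i} \varphi_{i}(p_{i}) = 1$, where $\varphi_{i}(p_{i}) = t^{a^{(i)}}(p_{i})/t^{b^{(i)}}(p_{i}) \in K^{\ast}$; fixing the other blocks and varying $p_{i}$ forces $\varphi_{i}$ to be constant, and evaluating at the all-ones point (which lies in $X_{C_{i}^{k}}^{\ast}$) shows that constant is $1$. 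Each nonzero $f_{i}$ therefore satisfies the hypotheses of the single-cycle results: by Proposition~\ref{supp-proposition} its supports cover the whole block and by the proposition preceding it all of its exponents equal $\tfrac{q-1}{2}$, whence $f_{i} \in J$ by Proposition~\ref{main-prop-one} when both supports are nonempty and by Remark~\ref{main-remark} when one of the supports is the whole block.

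Finally I would recombine the $f_{i}$ into $f'$ through the telescoping identity
\begin{equation*}
t^{a'} - t^{b'} = \prod_{i=1}^{m} t^{a^{(i)}} - \prod_{i=1}^{m} t^{b^{(i)}} = \sum_{i=1}^{m} \Big( \prod_{j<i} t^{b^{(j)}} \Big)\, f_{i}\, \Big( \prod_{j>i} t^{a^{(j)}} \Big),
\end{equation*}
which exhibits $f'$ as an element of $\langle f_{1},\ldots,f_{m} \rangle \subseteq J$ and completes the argument. I expect the main obstacle to be the product-to-factor reduction of the third paragraph: one must argue carefully that a binomial vanishing on the whole product forces each blockwise binomial to vanish on its single-cycle set, which is precisely where the multiplicative independence of the blocks and the availability of the all-ones point are used; the remaining steps are then a direct assembly of the propositions already proved.
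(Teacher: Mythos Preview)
Your proposal is correct and follows essentially the same route as the paper: both establish $J\subseteq\mathbf{I}(X_{\mathcal{G}}^{\ast})$ via Lemma~\ref{lemma-contain}, reduce to binomials with disjoint supports and all partial degrees $<q-1$ via Proposition~\ref{prop-binomial}, split into block-wise binomials $f_i$, show each $f_i\in\mathbf{I}(X_{C_i^k}^{\ast})$ and hence $f_i\in J$ through Propositions~\ref{main-prop-one}, \ref{supp-proposition} and Remark~\ref{main-remark}, and then reassemble $f'$ by the same telescoping identity. The only cosmetic difference is in the product-to-factor step: the paper simply evaluates $f'$ at the point with all-ones in every block except the $i$-th, which immediately gives $f_i(p_i)=0$; your ratio argument (showing $\varphi_i$ is constant and then equal to $1$ at the all-ones point) is equivalent but slightly more elaborate than necessary.
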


\begin{proof}
Let $J=\left\langle \left\{t_{i}^{q-1}-1\right\}_{i=1}^{km}\bigcup \left( \displaystyle\bigcup_{i=1}^{m}{A_i} \right) \right\rangle$. By Lemma~\ref{lemma-contain} 
we get that $J \subseteq \mathbf{I}(X_{\mathcal{G}}^{\ast})$. Now we will prove the other inclusion. We know that $\mathbf{I}(X_{\mathcal{G}}^{\ast})$ 
is generated by binomials, let $f=t^{a}-t^{b}\in{\mathbf{I}(X_{\mathcal{G}}^{\ast})}$ be a binomial; we can suppose that 
$\mathrm{supp}(a) \cap \mathrm{supp}(b)=\emptyset$. By Proposition~\ref{prop-binomial} we can write $f$ as: 

\begin{center}
$f=g+f'$,
\end{center}

\noindent where $g\in{\mathbf{I}(T^{km})}$ and $f'=t^{a'}-t^{b'}$ is a binomial such that none of its terms is divisible by 
any $t_{i}^{q-1}$ for all $i$ and $\mathrm{supp}(a') \cap \mathrm{supp}(b')=\emptyset$. Therefore $f\in{J}$ if and only if $f'\in{J}$. 
We are going to prove that $f'\in{J}$. Let: 

\begin{center}
$f'=t^{a_{1}'} \cdots t^{a_{m}'}- t^{b_{1}'} \cdots t^{b_{m}'}$,
\end{center}

\noindent where $t^{a'}=t^{a_{1}'} \cdots t^{a_{m}'}$ and $t^{b'}=t^{b_{1}'} \cdots t^{b_{m}'}$. $t^{a_{i}'}$ and $t^{b_{i}'}$ are monomials in 
$K[t_{(i-1)k+1},\ldots,t_{ik}]$ for all $i=1,\ldots,m$. Let $i\in{\{(i-1)k+1,\ldots,ik\}}$. We can write $f'$ as: 

\begin{center}
$f'=(t^{a_{i}'}-t^{b_{i}'})t^{a_{1}'} \cdots t^{a_{i-1}'} t^{a_{i+1}'} \cdots t^{a_{m}'}+t^{b_{i}'}$ $[t^{a_{1}'} \cdots t^{a_{i-1}'} t^{a_{i+1}'} \cdots t^{a_{m}'}-t^{b_{1}'} \cdots t^{b_{i-1}'} t^{b_{i+1}'} \cdots t^{b_{m}'}]$. 
\end{center}

As $f\in{\mathbf{I}(X_{\mathcal{G}}^{\ast})}$, it follows that $f'\in{\mathbf{I}(X_{\mathcal{G}}^{\ast})}$. Let 
$(x_{(i-1)k+1},\ldots,x_{ik})\in{(\mathbb{F}_{q}^{\ast})^{k}}$, then 
$x=(1,\ldots,1,x_{(i-1)k+1}x_{(i-1)k+2},x_{(i-1)k+2}x_{(i-1)k+3},\ldots,x_{ik-1}x_{ik},x_{ik}x_{(i-1)k+1},1,\ldots,1)\in{X_{\mathcal{G}}^{\ast}}$. Let 
$f_{i}=t^{a_{i}'}-t^{b_{i}'}$. As $f'$ vanishes on $x$ we get that $f_{i}$ vanishes on 
$(x_{(i-1)k+1}x_{(i-1)k+2},x_{(i-1)k+2}$ $x_{(i-1)k+3},\ldots,x_{ik-1}x_{ik},x_{ik}x_{(i-1)k+1})$, then $f_{i}\in{\mathbf{I}(X_{C_{i}^{k}}^{\ast})}$.
It is clear that $\mathrm{supp}(a_{i}') \cap \mathrm{supp}(b_{i}')=\emptyset$.

\medskip

Suppose that $f_{i}$ is a nonzero polynomial and $supp(a_{i}') \neq \emptyset$. If $\mathrm{supp}(a_{i}')=\{(i-1)k+1,\ldots,ik\}$, then 
$\mathrm{supp}(b_{i}')=\emptyset$. From Remark~\ref{main-remark} we deduce $f_{i}\in{J}$. If $\mathrm{supp}(a_{i}') \subsetneq \{(i-1)k+1,\ldots,ik\}$, 
we know that $\mathrm{supp}(b_{i}') \neq \emptyset$ and all the coordinates of $a_{i}'$ and $b_{i}'$ are equal to $\frac{q-1}{2}$. 
From Proposition~\ref{supp-proposition} we get that $\mathrm{supp}(a_{i}') \cup \mathrm{supp}(b_{i}')=\{(i-1)k+1,\ldots,ik\}$ and by 
Proposition~\ref{main-prop-one} we find $f_{i}\in{J}$. In any case we obtain $f_{i}\in{J}$.

\medskip

On the other hand we have that: 

\begin{center}
$f'=(t^{a_{1}'}-t^{b_{1}'})t^{a_{2}'} \cdots t^{a_{m}'}+t^{b_{1}'}$ $[t^{a_{2}'} \cdots t^{a_{m}'}-t^{b_{2}'} \cdots t^{b_{m}'}]$. 
\end{center}

If we do the same procedure with the binomial $t^{a_{2}'} \cdots t^{a_{m}'}-t^{b_{2}'} \cdots t^{b_{m}'}$, we prove
$t^{a_{2}'} \cdots t^{a_{m}'}-t^{b_{2}'} \cdots t^{b_{m}'}=(t^{a_{2}'}-t^{b_{2}'})t^{a_{3}'} \cdots t^{a_{m}'}+t^{b_{2}'}$ $[t^{a_{3}'} \cdots t^{a_{m}'}-t^{b_{3}'} \cdots t^{b_{m}'}]$, therefore: 

\begin{center}
$f'=f_{1}t^{a_{2}'} \cdots t^{a_{m}'}+f_{2}t^{b_{1}'}t^{a_{3}'} \cdots$ $ t^{a_{m}'}+t^{b_{1}'}t^{b_{2}'}[t^{a_{3}'} \cdots t^{a_{m}'}-t^{b_{3}'} \cdots t^{b_{m}'}]$.
\end{center}

If we proceed like before with the binomial $t^{a_{3}'} \cdots t^{a_{m}'}-t^{b_{3}'} \cdots t^{b_{m}'}$ and we continue, we get: 

\begin{center}
$f'=\displaystyle\sum_{j=1}^m f_{j}h_{j}$,
\end{center}
 
\noindent where $h_{j}\in{K[t_{1},\ldots,t_{km}]}$ for all $j$. As $f_{j}\in{J}$ for all $j$ we show $f'\in{J}$.\end{proof}

Let $\mathcal{G}_{i}$ be a graph with $m_{i}$ connected components and 
each component is a $k_{i}$-cycle, where $1 \leq i \leq r$. Suppose that $k_{i}=2\gamma_{i}+1$ and 
$C_{i1}^{k_{i}}=x_{1}^{i} \cdots x_{k_{i}}^{i}x_{1}^{i}$, $C_{i2}^{k_{i}}=x_{k_{i}+1}^{i} \cdots x_{2k_{i}}^{i}x_{k_{i}+1}^{i}$, \ldots, 
$C_{im_{i}}^{k_{i}}=x_{(m_{i}-1)k_{i}+1}^{i}\cdots x_{m_{i}k_{i}}^{i}x_{(m_{i}-1)k_{i}+1}^{i}$ are all the components of $\mathcal{G}_{i}$.
Let $\mathcal{G}= \bigcup_{i=1}^{r}{\mathcal{G}_{i}}$. In this case we will work with the polynomial ring 
$K[t_{1}^{1},\ldots,t_{k_{1}m_{1}}^{1},\ldots,t_{1}^{r},\ldots,t_{k_{r}m_{r}}^{r}]$.

\medskip

For each $1 \leq i \leq r$, let:  

\begin{center}

$F_{1}^{i}=\{ A \subseteq \{1,\ldots,k_{i}\} \mid \left|A\right|=\gamma_{i} \}$, 

\medskip

$F_{2}^{i}=\{ A \subseteq \{k_{i}+1,\ldots,2k_{i}\} \mid \left|A\right|=\gamma_{i} \}$,

$\vdots$ 

$F_{m_{i}}^{i}=\{ A \subseteq \{(m_{i}-1)k_{i}+1,\ldots,m_{i}k_{i}\} \mid \left|A\right|=\gamma_{i} \}$. 

\end{center}

Let $A_{j}^{i}=\{ (t_{\alpha_{1}}^{i})^{\frac{q-1}{2}} \cdots (t_{\alpha_{\gamma_{i}}}^{i})^{\frac{q-1}{2}}- (t_{w_{1}}^{i})^{\frac{q-1}{2}} \cdots (t_{w_{\gamma_{i}+1}}^{i})^{\frac{q-1}{2}} \mid \{\alpha_{1},\ldots,\alpha_{\gamma_{i}}\} \in{F_{j}^{i}}$ and $ \{w_{1},\ldots,$ $w_{\gamma_{i}+1}\}=\{(j-1)k_{i}+1,\ldots,jk_{i}\}-\{\alpha_{1},\ldots,\alpha_{\gamma_{i}}\} \}$. Note that $ \bigcup_{j=1}^{m_{i}}{A_{j}^{i}} \subseteq K[t_{1}^{i},\ldots,t_{k_{i}m_{i}}^{i}]$.
The following theorem is a generalization of Theorem~\ref{vanish-ideal-main-theorem}.  

\begin{theorem}
\label{second-vanish-ideal-main-theorem}
Let $\mathcal{G}= \bigcup_{i=1}^{r}{\mathcal{G}_{i}}$. Then the vanishing ideal of $X_{\mathcal{G}}^{\ast}$ is given by:

\begin{center}
$\mathbf{I}(X_{\mathcal{G}}^{\ast})=\left\langle \displaystyle\bigcup_{i=1}^{r}{ \{(t_{j}^{i})^{q-1}-1 \}_{j=1}^{k_{i}m_{i}} } \bigcup \left( \displaystyle\bigcup_{i=1}^{r}{ \displaystyle\bigcup_{j=1}^{m_{i}}{A_{j}^{i} }  }\right) \right\rangle$.
\end{center} 

\end{theorem}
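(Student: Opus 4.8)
The plan is to follow the proof of Theorem~\ref{vanish-ideal-main-theorem} almost verbatim, the key observation being that $\mathcal{G}=\bigcup_{i=1}^{r}\mathcal{G}_i$ is itself a graph whose $\sum_{i=1}^{r}m_i$ connected components are all odd cycles, and that \emph{none} of the single-component results used earlier depends on the cycles having a common length. Write $N=\sum_{i=1}^{r}k_im_i$ and $J=\left\langle \bigcup_{i=1}^{r}\{(t_j^i)^{q-1}-1\}_{j=1}^{k_im_i}\cup\left(\bigcup_{i=1}^{r}\bigcup_{j=1}^{m_i}A_j^i\right)\right\rangle$.

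First I would prove $J\subseteq\mathbf{I}(X_{\mathcal{G}}^{\ast})$. As $X_{\mathcal{G}}^{\ast}\subseteq T^{N}$, each generator $(t_j^i)^{q-1}-1$ lies in $\mathbf{I}(T^N)\subseteq\mathbf{I}(X_{\mathcal{G}}^{\ast})$; and each binomial of $A_j^i$ vanishes on $X_{\mathcal{G}}^{\ast}$ by the computation in Lemma~\ref{lemma-contain}, which uses only the parameterization of the single cycle $C_{ij}^{k_i}$ and so transfers directly to length $k_i$.

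For the reverse inclusion I would take a binomial $f=t^a-t^b\in\mathbf{I}(X_{\mathcal{G}}^{\ast})$ with $\mathrm{supp}(a)\cap\mathrm{supp}(b)=\emptyset$. Proposition~\ref{prop-binomial}, whose proof performs the division algorithm one variable at a time against the torus relations, applies unchanged, so I may write $f=g+f'$ with $g\in\mathbf{I}(T^N)\subseteq J$ and $f'=t^{a'}-t^{b'}$ a binomial with disjoint support, none of whose two terms is divisible by any $(t_j^i)^{q-1}$; it then suffices to show $f'\in J$. Factoring $t^{a'}=\prod_{i,j}t^{a'_{ij}}$ and $t^{b'}=\prod_{i,j}t^{b'_{ij}}$ according to the block of variables belonging to $C_{ij}^{k_i}$, the telescoping identity used in Theorem~\ref{vanish-ideal-main-theorem}, now peeled off one component at a time over the double index $(i,j)$, gives $f'=\sum_{i,j}f_{ij}h_{ij}$ with $f_{ij}=t^{a'_{ij}}-t^{b'_{ij}}$ and $h_{ij}\in S$.

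It remains to place each $f_{ij}$ in $J$. Evaluating $f'$ at the points of $X_{\mathcal{G}}^{\ast}$ that are $1$ outside the block of $C_{ij}^{k_i}$ shows $f_{ij}\in\mathbf{I}(X^{\ast}_{C_{ij}^{k_i}})$, the vanishing ideal of a single odd cycle of length $k_i$; moreover $f_{ij}$ has disjoint support and every variable exponent $<q-1$. If $\mathrm{supp}(a'_{ij})$ is the whole block, Remark~\ref{main-remark} gives $f_{ij}\in J$; otherwise the single-cycle analysis forces $\mathrm{supp}(b'_{ij})\neq\emptyset$ with all exponents $\frac{q-1}{2}$, Proposition~\ref{supp-proposition} gives $\mathrm{supp}(a'_{ij})\cup\mathrm{supp}(b'_{ij})=\{(j-1)k_i+1,\ldots,jk_i\}$, and Proposition~\ref{main-prop-one} then yields $f_{ij}\in J$. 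Hence $f'\in J$ and $f\in J$. The main obstacle is purely organizational: one must verify that each invoked single-component result was established for an arbitrary odd cycle rather than for the common length $k$, so that it applies to the individual length $k_i$, and that the telescoping collapse still works once the bookkeeping runs over the pair $(i,j)$; no new inequality or evaluation is required.
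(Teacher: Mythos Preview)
Your argument is correct, and it recovers the result by essentially re-running the proof of Theorem~\ref{vanish-ideal-main-theorem} at the level of individual cycles, using the observation that none of Lemma~\ref{lemma-contain}, Proposition~\ref{prop-binomial}, Proposition~\ref{main-prop-one}, Remark~\ref{main-remark}, or Proposition~\ref{supp-proposition} relies on a common cycle length. The paper takes a shorter, more modular route: it telescopes only over the index $i\in\{1,\ldots,r\}$ (one block per graph $\mathcal{G}_i$), shows by the same evaluation trick that each $f_i=t^{a_i}-t^{b_i}$ lies in $\mathbf{I}(X_{\mathcal{G}_i}^{\ast})$, and then invokes Theorem~\ref{vanish-ideal-main-theorem} as a black box to place $f_i$ in $J$, since each $\mathcal{G}_i$ already has all components of the same length $k_i$. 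Your approach buys you the insight that Theorem~\ref{vanish-ideal-main-theorem} never truly needed the equal-length hypothesis, while the paper's approach buys brevity by not reopening the single-cycle analysis; in particular the paper does not need to re-apply Proposition~\ref{prop-binomial} here, since Theorem~\ref{vanish-ideal-main-theorem} absorbs an arbitrary binomial of $\mathbf{I}(X_{\mathcal{G}_i}^{\ast})$.
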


\begin{proof}
Let $J=\left\langle \displaystyle\bigcup_{i=1}^{r}{ \{(t_{j}^{i})^{q-1}-1 \}_{j=1}^{k_{i}m_{i}} } \bigcup \left( \displaystyle\bigcup_{i=1}^{r}{ \displaystyle\bigcup_{j=1}^{m_{i}}{A_{j}^{i} }  }\right) \right\rangle$. The proof is  similar to the proof of Theorem~\ref{vanish-ideal-main-theorem}. 
It follows from Lemma~\ref{lemma-contain} that $J \subseteq \mathbf{I}(X_{\mathcal{G}}^{\ast})$. 
Now we will prove the other inclusion. We know that $\mathbf{I}(X_{\mathcal{G}}^{\ast})$ is generated by binomials, let: 

\begin{center}
$f=t^{a_{1}} \cdots t^{a_{r}}- t^{b_{1}} \cdots t^{b_{r}} \in{\mathbf{I}(X_{\mathcal{G}}^{\ast})}$,
\end{center}

\noindent where $t^{a_{i}}$ and $t^{b_{i}}$ are monomials in $K[t_{1}^{i},\ldots,t_{k_{i}m_{i}}^{i}]$ for all $i$. 
Note that we can write $f$ as: 

\begin{center}
$f=(t^{a_{i}}-t^{b_{i}})t^{a_{1}} \cdots t^{a_{i-1}}t^{a_{i+1}} \cdots t^{a_{r}}+ t^{b_{i}}[t^{a_{1}} \cdots t^{a_{i-1}}t^{a_{i+1}}-t^{b_{1}} \cdots t^{b_{i-1}}t^{b_{i+1}} \cdots t^{b_{r}} ]$. 
\end{center}

Let $f_{i}=t^{a_{i}}-t^{b_{i}}$. As $f\in{\mathbf{I}(X_{\mathcal{G}}^{\ast})}$, it follows that $f_{i}\in{\mathbf{I}(X_{\mathcal{G}_{i}}^{\ast})}$, 
from Theorem~\ref{vanish-ideal-main-theorem} we deduce $f_{i}\in{J}$. It is easy to see that we can write $f$ as: 

\begin{center}
$f=\displaystyle\sum_{i=1}^r f_{i}h_{i}$,
\end{center}  

\noindent where $h_{i}\in{K[t_{1}^{1},\ldots,t_{k_{1}m_{1}}^{1},\ldots,t_{1}^{r},\ldots,t_{k_{r}m_{r}}^{r}]}$ for all $i$, therefore $f\in{J}$.\end{proof}

\section{The Regularity Of The Vanishing Ideal Of Odd Cycles }
\label{regularity-section}

We continue with the notation and definitions used in the
introduction and in the preliminaries. Let $\mathcal{G}_{i}$ be a graph with $m_{i}$ connected components, suppose that 
each component is a $k_{i}$-cycle with $k_{i}=2\gamma_{i}+1$ and $1 \leq i \leq r$. The following proposition is easy to prove. 

\begin{proposition}
\label{grob-basis-prop}
Let $\mathcal{G}= \bigcup_{i=1}^{r}{\mathcal{G}_{i}}$. Using the same notation of Theorem~\ref{second-vanish-ideal-main-theorem}, 
let: 

\begin{center}
$G=\displaystyle\bigcup_{i=1}^{r}{ \{(t_{j}^{i})^{q-1}-1 \}_{j=1}^{k_{i}m_{i}} } \bigcup \left( \displaystyle\bigcup_{i=1}^{r}{ \displaystyle\bigcup_{j=1}^{m_{i}}{A_{j}^{i} }  }\right)$.
\end{center} 

Then $G$ is a Gr\"obner basis for $\mathbf{I}(X_{\mathcal{G}}^{\ast})$ with respect to grlex order.
\end{proposition}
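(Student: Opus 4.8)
The plan is to verify the two defining properties of a Gröbner basis: that $G$ generates the ideal (which is already Theorem~\ref{second-vanish-ideal-main-theorem}), and that the leading terms of $G$ generate the full ideal of leading terms $\langle LT(\mathbf{I}(X_{\mathcal{G}}^{\ast}))\rangle$. The cleanest route is through Buchberger's criterion: I would show that every $S$-polynomial $S(g,g')$ formed from a pair $g,g'\in G$ reduces to zero modulo $G$. First I would record the leading terms under grlex. For the binomials $(t_j^i)^{q-1}-1$ the leading term is $(t_j^i)^{q-1}$, and for a generator in $A_j^i$ of the form $(t_{\alpha_1}^i)^{\frac{q-1}{2}}\cdots(t_{\alpha_{\gamma_i}}^i)^{\frac{q-1}{2}}-(t_{w_1}^i)^{\frac{q-1}{2}}\cdots(t_{w_{\gamma_i+1}}^i)^{\frac{q-1}{2}}$ the leading term is the degree-$\frac{(q-1)(\gamma_i+1)}{2}$ monomial $(t_{w_1}^i)^{\frac{q-1}{2}}\cdots(t_{w_{\gamma_i+1}}^i)^{\frac{q-1}{2}}$, since that term has the larger total degree. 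Fixing this tie-breaking carefully at the outset is what makes the rest routine.

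With leading terms in hand, I would split the $S$-polynomial computation into cases according to which kinds of generators are paired. When the two leading monomials are coprime (for instance when the generators involve disjoint variable blocks coming from different $\mathcal{G}_i$, or disjoint supports within a block), Buchberger's first criterion applies and the $S$-polynomial reduces to zero automatically. The substantive cases are: (a) two power binomials $(t_j^i)^{q-1}-1$ and $(t_{j'}^{i})^{q-1}-1$ sharing no variable, again coprime; (b) a power binomial paired with an $A_j^i$ generator whose leading monomial involves $t_j^i$; and (c) two generators from the same $A_j^i$. In case (b) the overlap is a single variable raised to $\frac{q-1}{2}$ dividing both leading terms, and reducing the $S$-polynomial uses the relation $(t^i)^{q-1}\equiv 1$ to collapse the extra factor, landing back inside the span of $G$. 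In case (c) the two generators correspond to two different $\gamma_i$-subsets $A,A'$ of the same index block; their $S$-polynomial is again a $\frac{q-1}{2}$-power binomial supported on that block, and one rewrites it as an $S$-combination of the $A_j^i$ generators together with the power binomials, exactly the kind of manipulation already performed in Proposition~\ref{main-prop-one} and Remark~\ref{main-remark}.

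The main obstacle I anticipate is case (c): showing that the $S$-polynomial of two generators within a single $A_j^i$ reduces to zero. The combinatorics of passing between two $\gamma_i$-element subsets of $\{(j-1)k_i+1,\dots,jk_i\}$ via exchanges must be handled uniformly, and one has to ensure the reduction steps only ever invoke generators actually in $G$ (no spurious binomials). The key observation that tames this is that all the relevant binomials have every exponent equal to $\frac{q-1}{2}$, so the reduction is governed purely by the underlying $0/1$ support vectors, and the exchange relations among equal-cardinality subsets are generated by single-element swaps, each of which is realized by one $A_j^i$ generator modulo a $(t^i)^{q-1}-1$ factor. Once this is framed in terms of support vectors, the reductions become the same telescoping-sum arguments as in Propositions~\ref{prop-binomial} and \ref{main-prop-one}, and Buchberger's criterion is satisfied on all pairs, establishing that $G$ is a Gröbner basis with respect to grlex.
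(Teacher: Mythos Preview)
Your approach via Buchberger's criterion is correct and is exactly the standard route; the paper itself omits the proof entirely, saying only that the proposition ``is easy to prove.'' Your identification of the leading terms under grlex is right, and cases (a) and (b) work as you describe: in particular, in case (b) the $S$-polynomial of $(t_{w_1}^i)^{q-1}-1$ and an $A_j^i$ generator whose leading monomial contains $t_{w_1}^i$ is exactly the negative of another $A_j^i$ generator, so it reduces to zero in one step.

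Your treatment of case (c) is correct in spirit but slightly overcomplicated. If $g_1,g_2\in A_j^i$ correspond to $(\gamma_i+1)$-subsets $W,W'$ with complementary $\gamma_i$-subsets $A,A'$, a direct computation gives
\[
S(g_1,g_2)=\Bigl(\textstyle\prod_{v\in W\cap A'} (t_v^i)^{q-1}\Bigr)\prod_{v\in A\cap A'}(t_v^i)^{\frac{q-1}{2}}-\Bigl(\textstyle\prod_{v\in W'\cap A} (t_v^i)^{q-1}\Bigr)\prod_{v\in A\cap A'}(t_v^i)^{\frac{q-1}{2}},
\]
so the $S$-polynomial is not purely a $\frac{q-1}{2}$-power binomial as you state. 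However, reducing each $(t_v^i)^{q-1}$ factor to $1$ via the generators $(t_v^i)^{q-1}-1$ collapses both terms to the same monomial $\prod_{v\in A\cap A'}(t_v^i)^{\frac{q-1}{2}}$, and the result is~$0$. Thus case (c) is handled by the power binomials alone, without appealing to Proposition~\ref{main-prop-one} or Remark~\ref{main-remark}; the combinatorics of subset exchanges you anticipate never actually comes into play.
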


\begin{theorem}
\label{regularity-theorem}
Let $\mathcal{G}$ be a graph with $m$ connected components and each component is a $k$-cycle. Suppose that $k=2\gamma+1$ and let
$S=K[t_{1},\ldots,t_{km}]$, then: 

\begin{center}
$\mathrm{reg}(S \slash \mathbf{I}(X_{\mathcal{G}}^{\ast}))=m(k+\gamma)\left( \displaystyle\frac{q-1}{2} \right)-km$.
\end{center}

\end{theorem}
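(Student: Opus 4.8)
The plan is to compute the regularity index by reducing, via the exact sequence already quoted in the preliminaries, to counting the Hilbert function of the quotient $S/(t_{km},\mathbf{I}(X_{\mathcal{G}}^{\ast}))$ and locating the degree at which the numerator polynomial $f(t)$ in the Hilbert series $F_{\mathcal{G}}(t)=f(t)/(1-t)$ reaches its top degree. Since $S/\mathbf{I}(X_{\mathcal{G}}^{\ast})$ is $1$-dimensional Cohen--Macaulay, the regularity equals $\deg f(t)$. The most direct route, however, is combinatorial: by Proposition~\ref{grob-basis-prop} the set $G$ is a Gröbner basis for $\mathbf{I}(X_{\mathcal{G}}^{\ast})$ under grlex, so $\mathbf{I}(X_{\mathcal{G}}^{\ast})$ and its initial ideal $\langle\mathrm{LT}(\mathbf{I}(X_{\mathcal{G}}^{\ast}))\rangle$ share the same Hilbert function. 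Thus the regularity index of $S/\mathbf{I}(X_{\mathcal{G}}^{\ast})$ equals that of $S/\langle\mathrm{LT}(\mathbf{I}(X_{\mathcal{G}}^{\ast}))\rangle$, and the latter is a monomial ideal whose standard monomials (the footprint $\Delta_{>}(\mathbf{I}(X_{\mathcal{G}}^{\ast}))$) can be described explicitly.

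\emph{Reading off the leading terms.} First I would identify $\langle\mathrm{LT}(\mathbf{I}(X_{\mathcal{G}}^{\ast}))\rangle$ from $G$. Each generator $t_i^{q-1}-1$ contributes $t_i^{q-1}$. Each binomial in $A_j^i$, of the form $(t_{\alpha_1})^{(q-1)/2}\cdots(t_{\alpha_\gamma})^{(q-1)/2}-(t_{w_1})^{(q-1)/2}\cdots(t_{w_{\gamma+1}})^{(q-1)/2}$, has (under grlex) leading term the degree-$(\gamma+1)$ side, namely $(t_{w_1}\cdots t_{w_{\gamma+1}})^{(q-1)/2}$. Since the components are on disjoint variable blocks, the footprint factors as a product over the $m$ blocks, and for each block the standard monomials are exactly those $t^a$ supported on $\{(i-1)k+1,\dots,ik\}$ with every exponent $<q-1$, and with \emph{not} containing any product of $\gamma+1$ distinct variables each to the power $(q-1)/2$ — equivalently, the number of indices whose exponent is $\geq(q-1)/2$ is at most $\gamma$.

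\emph{Finding the top-degree standard monomial.} The regularity is the top degree appearing in the footprint, i.e.\ the maximal total degree of a standard monomial. Per block, to maximize degree I take $\gamma$ of the $k$ variables up to exponent $q-2$ (the largest allowed below $q-1$) and the remaining $k-\gamma=\gamma+1$ variables up to exponent $(q-1)/2-1$ (the largest allowed below the ``flagged'' threshold so that the count of high exponents stays $\leq\gamma$). This gives a per-block maximal degree of $\gamma(q-2)+(\gamma+1)\big(\tfrac{q-1}{2}-1\big)$, and I would verify this monomial is genuinely standard and maximal by a short exchange argument. Multiplying by $m$ blocks and simplifying should collapse to $m(k+\gamma)\big(\tfrac{q-1}{2}\big)-km$; I would double-check the arithmetic using $k=2\gamma+1$ so that $k+\gamma=3\gamma+1$.

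\emph{The main obstacle.} The genuinely delicate step is characterizing the footprint precisely — in particular confirming that the \emph{only} minimal generators of the initial ideal coming from the $A_j^i$ are the stated leading monomials and that no $S$-polynomial reductions introduce extra leading terms (this is exactly what Proposition~\ref{grob-basis-prop} guarantees, so I would lean on it). Given that, the remaining work is to prove that the degree-maximizing standard monomial has the claimed ``$\gamma$ high / $\gamma+1$ medium'' profile. The subtlety is the interaction between the two constraints (each exponent $<q-1$, and at most $\gamma$ exponents $\geq(q-1)/2$): one must argue that pushing any medium variable above $(q-1)/2$ forces another above to drop, and that the optimal trade always favors the configuration above. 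A clean way to close this is to show that the top coefficient of the Hilbert series numerator is nonzero exactly at that degree, equivalently that there is a standard monomial of that degree but none larger.
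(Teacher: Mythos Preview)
Your approach is correct and, at its combinatorial core, coincides with the paper's: both arguments identify the same witness monomial (per block, $\gamma$ variables at exponent $q-2$ and $\gamma+1$ variables at exponent $\tfrac{q-1}{2}-1$) and the same degree bound (any monomial avoiding all leading terms of $G$ has per-block degree at most $\gamma(q-2)+(\gamma+1)(\tfrac{q-1}{2}-1)$). The arithmetic simplification to $(k+\gamma)\tfrac{q-1}{2}-k$ per block is the same in both.

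The difference is packaging. The paper homogenizes, passes to $S[u]/\mathbf{I}(X_{\mathcal{G}}^{\ast})^{h}$, and uses the exact sequence with the regular element $u$ to reduce to showing $h_d=H_{\mathbb{Y}}(d)-H_{\mathbb{Y}}(d-1)$ vanishes for $d>\alpha$ and is positive at $d=\alpha$; it then argues case-by-case that every degree-$d$ monomial with $d>\alpha$ is congruent modulo $\mathbf{I}(X_{\mathcal{G}}^{\ast})^{h}$ to a multiple of $u$. You bypass the homogenization entirely and read the regularity directly as the maximal degree of a standard monomial in the affine footprint, which is legitimate since the affine Hilbert function $H_{X_{\mathcal{G}}^{\ast}}(d)$ counts standard monomials of degree $\le d$ and stabilizes precisely when all standard monomials have been counted. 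Your route is shorter; the paper's route makes the Cohen--Macaulay structure and the role of the regular element explicit, which is what one would want if the goal were to extend to settings where the footprint is less transparent. One small point you should make explicit in a full write-up: the equivalence between the regularity of $S/\mathbf{I}(X_{\mathcal{G}}^{\ast})$ as stated and the regularity of the projective closure $S[u]/\mathbf{I}(\mathbb{Y}_{\mathcal{G}}^{\ast})$ (the paper's definition lives on the projective side), though this is immediate from $H_{X_{\mathcal{G}}^{\ast}}(d)=H_{\mathbb{Y}_{\mathcal{G}}^{\ast}}(d)$.
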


\begin{proof}
Let $\mathbb{Y}_{\mathcal{G}}^{\ast}$ be the projective closure of $X_{\mathcal{G}}^{\ast}$. We know that for each $d \geq 1$
the codes $C_{X_{\mathcal{G}}^{\ast}}(d)$ and $C_{\mathbb{Y}_{\mathcal{G}}^{\ast}}(d)$ have the same basic parameters
(see {\rm\cite[Theorem~2.4]{affine-codes}}). From Proposition~\ref{grob-basis-prop}
we know that: 

\begin{center}
$G=\left\{t_{i}^{q-1}-1\right\}_{i=1}^{km}\bigcup \left( \displaystyle\bigcup_{i=1}^{m}{A_i} \right)$, 
\end{center}

\noindent is a Gr\"obner basis for $\mathbf{I}(X_{\mathcal{G}}^{\ast})$. On the other hand we know that 
$\mathbf{I}(\mathbb{Y}_{\mathcal{G}}^{\ast})=\mathbf{I}(X_{\mathcal{G}}^{\ast})^{h}$, where $\mathbf{I}(X_{\mathcal{G}}^{\ast})^{h}$
is the homogenization of $\mathbf{I}(X_{\mathcal{G}}^{\ast})$. We are going to homogenize 
with respect to the variable $u$, since $G$ is a Gr\"obner basis for $\mathbf{I}(X_{\mathcal{G}}^{\ast})$ with respect to 
the grlex order, it follows that $G^{h}$ is a Gr\"obner basis for $\mathbf{I}(X_{\mathcal{G}}^{\ast})^{h} \subseteq S[u]$, 
where $S[u]=K[t_{1},\ldots,t_{km},u]$, regarding the order:

\begin{center}
$t^{\delta}u^{a} >_{h} t^{\beta}u^{b} \Leftrightarrow t^{\delta} >_{grlex} t^{\beta}$ or $t^{\delta}=t^{\beta}$ and $a > b$, 
\end{center} 

\noindent where $t^{\delta}$ and $t^{\beta}$ are monomials in $S$. Denote by $R$ the graded ring 
$S[u] \slash \mathbf{I}(X_{\mathcal{G}}^{\ast})^{h}$. Consider $u\in{S[u]}$, 
let $\overline{u}=\mathbf{I}(X_{\mathcal{G}}^{\ast})^{h}+u$. $\overline{u}$ is regular on $R$, then we have the following 
exact sequence of graded $S[u]$-modules: 

\begin{center}
$0 \rightarrow R[-1] \stackrel{\overline{u}}{\rightarrow} R   \rightarrow    R \slash \left\langle \overline{u} \right\rangle \rightarrow  0$,
\end{center}

\noindent where $R[-1]$ is the graded $S[u]$-module obtained by a shift in the graduation, in other words 
$R[-1]_{i}=R_{i-1}$. Since $S[u] \slash \mathbf{I}(X_{\mathcal{G}}^{\ast})^{h}$ is a $1$-dimensional ring, 
the regularity of $S[u] \slash \mathbf{I}(X_{\mathcal{G}}^{\ast})^{h}$ is the least integer $r$ for which 
$H_{\mathbb{Y}_{\mathcal{G}}^{\ast}}(d)$ is equal to some constant for all $d \geq r$. From the last exact sequence
we have 
$H_{\mathbb{Y}_{\mathcal{G}}^{\ast}}(d)-H_{\mathbb{Y}_{\mathcal{G}}^{\ast}}(d-1)=\mathrm{dim}_{K}(R \slash \left\langle \overline{u} \right\rangle  )_{d}$. 
For $d \geq 1$, we define: 

\begin{center}
$h_{d}:=\mathrm{dim}_{K}(R \slash \left\langle \overline{u} \right\rangle  )_{d}=H_{\mathbb{Y}_{\mathcal{G}}^{\ast}}(d)-H_{\mathbb{Y}_{\mathcal{G}}^{\ast}}(d-1)$.
\end{center} 

First we will prove that $\mathrm{reg}(S[u] \slash \mathbf{I}(X_{\mathcal{G}}^{\ast})^{h} ) \leq m(k+\gamma)\left( \displaystyle\frac{q-1}{2} \right)-km$.
Let $\alpha=m(k+\gamma)\left( \displaystyle\frac{q-1}{2} \right)-km$. If we show that $h_{d}=0$ for $d \geq \alpha +1$, then 
$H_{\mathbb{Y}_{\mathcal{G}}^{\ast}}(d-1)=H_{\mathbb{Y}_{\mathcal{G}}^{\ast}}(d)$ for $d-1 \geq \alpha$, and our result follows. 
Let $d \geq \alpha +1$. To show that $h_{d}=0$ for $d \geq \alpha +1$, it is enough to prove that if $g\in{S[u]_{d}}$ is a monomial, 
then: 

\begin{equation}
\label{class-equality}
\left\langle \overline{u} \right\rangle +( \mathbf{I}(X_{\mathcal{G}}^{\ast})^{h} +g)=\left\langle \overline{u} \right\rangle + (\mathbf{I}(X_{\mathcal{G}}^{\ast})^{h}).
\end{equation}

Let $g=t^{a_{1}} \cdots t^{a_{m}} u^{a_{0}}\in{S[u]_{d}}$, where $t^{a_{i}}$ is a monomial in $K[t_{(i-1)k+1},\ldots,t_{ik}]$ for $i=1,\ldots,m$.
If $a_{0} > 0$, it is clear that \ref{class-equality} follows, therefore we will suppose that $a_{0}=0$. For $i\in{\{1,\ldots,m\}}$, let 
$g_{i}=t^{a_{1}} \cdots t^{a_{i-1}} t^{a_{i+1}} \cdots t^{a_{m}}$. 

\medskip

Let $i\in{\{1,\ldots,m\}}$, suppose that there is $w\in{ \{(i-1)k+1,\ldots,ik\} }$ such that $t_{w}^{q-1} \mid t^{a_{i}}$, 
then $t^{a_{i}}=t_{w}^{q-1} t^{c}$, where $t^{c}$ is a monomial in $K[t_{(i-1)k+1},\ldots,t_{ik}]$. Therefore we can write $g$ as: 

\begin{center}
$g=t^{a_{1}} \cdots t^{a_{i-1}} t_{w}^{q-1} t^{c} t^{a_{i+1}} \cdots t^{a_{m}}=g_{i}t^{c}[t_{w}^{q-1}-u^{q-1}]+u^{q-1}g_{i}t^{c}$,
\end{center}

\noindent it is clear that \ref{class-equality} follows, then we will suppose that all the coordinates of each $a_{i}$ are less or equal than 
$q-2$. Now we will suppose  that for each $i\in{\{1,\ldots,m\}}$, the monomial $t^{a_{i}}$ is not divisible by any $LT(f)$, for all 
$f\in{A_{i}^{h}}$. Then we can write $t^{a_{i}}$ as:

\begin{center}
$t^{a_{i}}=(t_{w_{i1}}^{a_{i1}} \cdots t_{w_{i \gamma+1}}^{a_{i \gamma+1}})t_{w_{i \gamma+2}}^{a_{i \gamma+2}} \cdots t_{w_{i k}}^{a_{i k}}$,
\end{center}

\noindent where $0 \leq a_{ij} \leq \frac{q-1}{2}-1$ for all $(i,j)\in{\{1,\ldots,m\} \times \{1,\ldots,\gamma+1\}  }$ and 
$\left| \{w_{i1},\ldots,w_{i k} \} \right|=k$. On the other hand we have that:  

\begin{center}
$\mathrm{deg}(g)= \displaystyle\sum_{i=1}^m  \displaystyle\sum_{j=1}^{\gamma+1} a_{ij} + \displaystyle\sum_{i=1}^m  \displaystyle\sum_{j=\gamma+2}^{k} a_{ij} $,
\end{center}

\noindent it follows that $\mathrm{deg}(g) \leq m(\gamma+1)(\frac{q-1}{2}-1)+m\gamma(q-2)$, therefore: 

\begin{center}
$\alpha +1=m(k+\gamma)\left( \displaystyle\frac{q-1}{2} \right)-km +1 \leq m(\gamma+1)\left( \displaystyle\frac{q-1}{2}-1 \right)+m\gamma(q-2)$, 
\end{center}

\noindent we deduce that: 

\begin{center}
$ \hspace{2.8 cm} mk\left(\displaystyle\frac{q-1}{2}\right) -km +1 \leq m\left(\displaystyle\frac{q-1}{2}\right) -m(\gamma+1)+m\gamma(q-2)$,

$ \hspace{1.4 cm} m\left(\displaystyle\frac{q-1}{2}\right)(k-1) +1 \leq m[k-(\gamma+1)] + m\gamma(q-2)$,

$ \hspace{1 cm} m\gamma(q-1) +1 \leq m\gamma + m\gamma(q-2)$,

$m\gamma(q-1) +1 \leq m\gamma(q-1)$,

$ \hspace{0.7 cm} 1 \leq 0$,

\end{center}

\noindent this is a contradiction, therefore there is $i\in{\{1,\ldots,m\}}$ such that $t^{a_{i}}$ is divisible by $LT(f)$ for some 
$f=t_{\alpha_{1}}^{\frac{q-1}{2}} \cdots t_{\alpha_{\gamma}}^{\frac{q-1}{2}}u^{\frac{q-1}{2}}-t_{w_{1}}^{\frac{q-1}{2}} \cdots t_{w_{\gamma+1}}^{\frac{q-1}{2}}\in{A_{i}^{h}}$. Then $t^{a_{i}}=(t_{w_{1}}^{\frac{q-1}{2}} \cdots t_{w_{\gamma+1}}^{\frac{q-1}{2}})t^{c} $,
where $t^{c}$ is a monomial in $K[t_{(i-1)k+1},\ldots,t_{ik}]$ and all the coordinates of $c$ are between $0$ and $q-2$. We can write $g$ as: 

\begin{center}
$g=-g_{i}t^{c}f+g_{i}t^{c}(t_{\alpha_{1}}^{\frac{q-1}{2}} \cdots t_{\alpha_{\gamma}}^{\frac{q-1}{2}}u^{\frac{q-1}{2}})$,
\end{center}

\noindent then we show \ref{class-equality} follows, thus we have proved that 
$\mathrm{reg}(S[u] \slash \mathbf{I}(X_{\mathcal{G}}^{\ast})^{h} ) \leq \alpha$. 

\medskip

Now we will show that $\alpha \leq \mathrm{reg}(S[u] \slash \mathbf{I}(X_{\mathcal{G}}^{\ast})^{h} )$. If we show that $h_{d} >0$
for $d=\alpha$, then $H_{\mathbb{Y}_{\mathcal{G}}^{\ast}}(d-1)<H_{\mathbb{Y}_{\mathcal{G}}^{\ast}}(d)$, for $d=\alpha$, and our 
result follows. It suffices to find a monomial $M\in{S[u]_{d}}$ such that: 

\begin{equation}
\label{class-not-equal}
\left\langle \overline{u} \right\rangle +( \mathbf{I}(X_{\mathcal{G}}^{\ast})^{h}+M) \neq \left\langle \overline{u} \right\rangle + (\mathbf{I}(X_{\mathcal{G}}^{\ast})^{h}). 
\end{equation}

For $i\in{\{1,\ldots,m\}}$, let $M_{i}=t_{(i-1)k+1}^{q-2} \cdots t_{(i-1)k+\gamma}^{q-2}t_{(i-1)k+\gamma+1}^{ \frac{q-1}{2}-1} \cdots t_{ik}^{ \frac{q-1}{2}-1}$.
Note that $\mathrm{deg}(M_{i})=(k+\gamma)(\frac{q-1}{2})-k$ and $M_{i}$ is not divisible by any $LT(f)$ for all $f\in{G_{i}^{h}}$, where 
$G_{i}=\{t_{j}^{q-1}-1\}_{j=(i-1)k+1}^{ik} \cup A_{i}$. Let: 

\begin{center}
$M= \displaystyle\prod_{i=1}^m M_i$.
\end{center}

It is clear that $M\in{S[u]_{d}}$. Suppose that: 

\begin{center}
$\left\langle \overline{u} \right\rangle +( \mathbf{I}(X_{\mathcal{G}}^{\ast})^{h}+M) = \left\langle \overline{u} \right\rangle + (\mathbf{I}(X_{\mathcal{G}}^{\ast})^{h})$,
\end{center} 

\noindent then we get that $\mathbf{I}(X_{\mathcal{G}}^{\ast})^{h}+M\in{\left\langle \overline{u} \right\rangle}$, therefore
$\mathbf{I}(X_{\mathcal{G}}^{\ast})^{h}+M=\mathbf{I}(X_{\mathcal{G}}^{\ast})^{h}+\tilde{g}u$, where $\tilde{g}\in{S[u]}$, thus we deduce that 
$M-\tilde{g}u\in{\mathbf{I}(X_{\mathcal{G}}^{\ast})^{h}}$, then we can write $M$ as: 

\begin{center}
$M=g' + \tilde{g}u$,
\end{center} 

\noindent where $g' \in{\mathbf{I}(X_{\mathcal{G}}^{\ast})^{h}}$, making $u=0$ in the previous equation we obtain: 

\begin{center} 
$M\in{ \left\langle \{LT(f) \mid f\in{G^{h}}\} \right\rangle  }$,  
\end{center}

\noindent it follows that $M$ is divisible by $LT(f)$ for some $f\in{G^{h}}$, and this is a contradiction, thus \ref{class-not-equal} follows.\end{proof}

\begin{theorem}
Let $\mathcal{G}_{i}$ be a graph with $m_{i}$ connected components, suppose that 
each component is a $k_{i}$-cycle with $k_{i}=2\gamma_{i}+1$ and $1 \leq i \leq l$.  
We will work with the polynomial ring $\tilde{S}=K[t_{1}^{1},\ldots,t_{k_{1}m_{1}}^{1},\ldots,t_{1}^{l},\ldots,t_{k_{l}m_{l}}^{l}]$.
Let $\mathcal{G}= \bigcup_{i=1}^{l}{\mathcal{G}_{i}}$, then:

\begin{center}
$\mathrm{reg}(\tilde{S} \slash \mathbf{I}(X_{\mathcal{G}}^{\ast}))=\displaystyle\sum_{i=1}^l  m_{i}(k_{i}+\gamma_{i})\left( \displaystyle\frac{q-1}{2} \right)-k_{i}m_{i}$.
\end{center}

\end{theorem}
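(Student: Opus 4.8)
The plan is to deduce the mixed-cycle formula from Theorem~\ref{regularity-theorem} by showing that, for graphs on pairwise disjoint vertex sets, the regularity is additive, the bridge being the footprint. Exactly as in the proof of Theorem~\ref{regularity-theorem}, and since $\mathbf{I}(X_{\mathcal{G}}^{\ast})$ contains every $(t_{j}^{i})^{q-1}-1$ (so that $\Delta_{>}(\mathbf{I}(X_{\mathcal{G}}^{\ast}))$ is finite and $H_{X_{\mathcal{G}}^{\ast}}(d)$ counts the standard monomials of degree at most $d$, with Hilbert polynomial the constant $o(X_{\mathcal{G}}^{\ast})$), the regularity is the largest degree occurring among standard monomials:
\begin{equation*}
\mathrm{reg}(\tilde{S}\slash \mathbf{I}(X_{\mathcal{G}}^{\ast}))=\max\{\deg(t^{a})\mid t^{a}\in \Delta_{>}(\mathbf{I}(X_{\mathcal{G}}^{\ast}))\}.
\end{equation*}
Applying this same identity to each $\mathcal{G}_{i}$ turns Theorem~\ref{regularity-theorem} into the statement that the largest degree of a standard monomial of $S_{i}\slash \mathbf{I}(X_{\mathcal{G}_{i}}^{\ast})$ equals $m_{i}(k_{i}+\gamma_{i})\left(\frac{q-1}{2}\right)-k_{i}m_{i}$, where $S_{i}=K[t_{1}^{i},\ldots,t_{k_{i}m_{i}}^{i}]$.

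The second step is to factor the footprint. By Proposition~\ref{grob-basis-prop}, both $G$ and each of its type-$i$ pieces $G_{i}=\{(t_{j}^{i})^{q-1}-1\}_{j}\cup \bigcup_{j}A_{j}^{i}$ are Gr\"obner bases (for $\mathbf{I}(X_{\mathcal{G}}^{\ast})$ and $\mathbf{I}(X_{\mathcal{G}_{i}}^{\ast})$ respectively) with respect to grlex, and every generator in $G$, hence every leading term, involves variables of a single type $t^{i}$ only. Writing a monomial as $t^{a}=t^{a_{1}}\cdots t^{a_{l}}$ with $t^{a_{i}}$ in the variables of $S_{i}$, divisibility of $t^{a}$ by a type-pure leading term is the same as divisibility of one factor $t^{a_{i}}$ by a type-$i$ leading term. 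Hence $t^{a}$ is standard for $\mathbf{I}(X_{\mathcal{G}}^{\ast})$ if and only if every $t^{a_{i}}$ is standard for $\mathbf{I}(X_{\mathcal{G}_{i}}^{\ast})$, i.e.
\begin{equation*}
\Delta_{>}(\mathbf{I}(X_{\mathcal{G}}^{\ast}))=\prod_{i=1}^{l}\Delta_{>}(\mathbf{I}(X_{\mathcal{G}_{i}}^{\ast})),
\end{equation*}
the product being taken over the disjoint variable groups.

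Finally I would combine the two displays. Since the variable groups are disjoint, $\deg(t^{a_{1}}\cdots t^{a_{l}})=\sum_{i}\deg(t^{a_{i}})$, so a standard monomial of $\mathbf{I}(X_{\mathcal{G}}^{\ast})$ of maximal degree is exactly a product of standard monomials of maximal degree in the individual factors; therefore the maximal degrees add. Feeding the value from Theorem~\ref{regularity-theorem} into this additivity yields
\begin{equation*}
\mathrm{reg}(\tilde{S}\slash \mathbf{I}(X_{\mathcal{G}}^{\ast}))=\sum_{i=1}^{l}\left[m_{i}(k_{i}+\gamma_{i})\left(\frac{q-1}{2}\right)-k_{i}m_{i}\right],
\end{equation*}
which is the asserted formula. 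The step I expect to carry the weight is the footprint factorization, whose only real ingredient is that no leading term of $G$ mixes variables of different types; once Proposition~\ref{grob-basis-prop} supplies the Gr\"obner basis this is transparent. As an alternative that avoids footprint bookkeeping altogether, one may repeat the two-sided estimate of Theorem~\ref{regularity-theorem} verbatim: for the upper bound, sum the per-component degree inequality over all components of all the $\mathcal{G}_{i}$, reaching the contradiction $1\leq 0$; for the lower bound, take the product $M=\prod_{i=1}^{l}\prod_{j=1}^{m_{i}}M_{ij}$ of the extremal monomials $M_{ij}$, of degree $(k_{i}+\gamma_{i})\left(\frac{q-1}{2}\right)-k_{i}$, one for each component.
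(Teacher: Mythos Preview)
Your argument is correct, but it follows a different route from the paper's. The paper argues by induction on $l$: it passes to the projective closures $\mathbb{Y}',\mathbb{Y}'',\mathbb{Y}$, writes each Hilbert series as $f(t)/(1-t)$ with $\deg f$ equal to the regularity, and then invokes the tensor-product isomorphism
\[
S'[u]/(u,\mathbf{I}(\mathbb{Y}'))\otimes_{K}S''[u]/(u,\mathbf{I}(\mathbb{Y}''))\cong S[u]/(u,\mathbf{I}(\mathbb{Y}))
\]
to conclude that the numerator polynomials multiply, hence their degrees (the regularities) add. Your proof stays entirely on the affine side and is more combinatorial: once Proposition~\ref{grob-basis-prop} tells you that every leading term in $G$ is type-pure, the footprint factors as a product over the variable blocks and the maximal standard degree is the sum of the blockwise maxima. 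This bypasses homogenization, Hilbert series, and the tensor-product lemma, and is arguably the shortest path from Proposition~\ref{grob-basis-prop} and Theorem~\ref{regularity-theorem} to the result; the paper's approach, on the other hand, isolates the structural reason (the quotient rings modulo $u$ tensor together) and would apply verbatim to any situation where the vanishing ideals split across disjoint variable sets, without needing to know a Gr\"obner basis. Your ``alternative'' paragraph, redoing the two-sided estimate of Theorem~\ref{regularity-theorem} with the product monomial $M=\prod_{i,j}M_{ij}$, is essentially a third proof and also works.
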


\begin{proof}
We will proceed by induction on $l$. If $l=1$ there is nothing else to do. We are going to suppose that our result follows for $l=r$ and we will prove the formula 
for $l=r+1$. Let $\mathcal{G}'=\bigcup_{i=1}^{r}{\mathcal{G}_{i}}$ and $\mathbb{Y}'$ be the projective closure of $X_{\mathcal{G'}}^{\ast}$. Let 
$S'=K[t_{1}^{1},\ldots,t_{k_{1}m_{1}}^{1},\ldots,t_{1}^{r},\ldots,t_{k_{r}m_{r}}^{r}]$, in the projective space we will work with the ring $S'[u]$. 
We know that the the Hilbert series of $S'[u] \slash \mathbf{I}(\mathbb{Y}')$ is given by: 

\begin{center}
$F_{\mathbb{Y}'}(t)=\displaystyle\frac{f(t)}{1-t}$,
\end{center}  

\noindent where $\mathrm{deg}(f)=\mathrm{reg}(S'[u] \slash \mathbf{I}(\mathbb{Y}'))$ and $F(S'[u] \slash (u,\mathbf{I}(\mathbb{Y}')),t)=f(t)$.
Let $\mathbb{Y}''$ be the projective closure of $X_{\mathcal{G}_{r+1}}^{\ast}$ and 
$S''=K[t_{1}^{r+1},\ldots,t_{k_{r+1}m_{r+1}}^{r+1}]$. We know that the Hilbert series 
of $S''[u] \slash \mathbf{I}(\mathbb{Y}'')$ is given by: 

\begin{center}
$F_{\mathbb{Y}''}(t)=\displaystyle\frac{g(t)}{1-t}$,
\end{center}  

\noindent where $\mathrm{deg}(g)=\mathrm{reg}(S''[u] \slash \mathbf{I}(\mathbb{Y}''))$ and $F(S''[u] \slash (u,\mathbf{I}(\mathbb{Y}'')),t)=g(t)$.
According to {\rm\cite[Proposition~2.2.20, p.42]{monalg}}, we have an isomorphism: 

\begin{center}
$S'[u] \slash (u,\mathbf{I}(\mathbb{Y}')) \otimes_{K} S''[u] \slash (u,\mathbf{I}(\mathbb{Y}'')) \cong S[u] \slash (u,\mathbf{I}(\mathbb{Y}))$,
\end{center}

\noindent where $S=K[t_{1}^{1},\ldots,t_{k_{1}m_{1}}^{1},\ldots,t_{1}^{r+1},\ldots,t_{k_{r+1}m_{r+1}}^{r+1}]$ and $\mathbb{Y}$ is the 
projective closure of $X_{\mathcal{G}}^{\ast}$ with $\mathcal{G}=\bigcup_{i=1}^{r+1}{\mathcal{G}_{i}}$. On the other hand we have that 
$F(S'[u] \slash (u,\mathbf{I}(\mathbb{Y}')) \otimes_{K} S''[u] \slash (u,\mathbf{I}(\mathbb{Y}'')),t)=F(S[u] \slash (u,\mathbf{I}(\mathbb{Y})),t)$, thus: 

\begin{center}
$F(S[u] \slash (u,\mathbf{I}(\mathbb{Y})),t)=F(S'[u] \slash (u,\mathbf{I}(\mathbb{Y}')),t)F(S''[u] \slash (u,\mathbf{I}(\mathbb{Y}'')),t)$,
\end{center} 

\noindent (see {\rm\cite[p.102]{monalg}} ) therefore $F(S[u] \slash (u,\mathbf{I}(\mathbb{Y})),t)=f(f)g(t)$, it follows that
$\mathrm{reg}(S[u] \slash \mathbf{I}(\mathbb{Y}))=\mathrm{reg}(S'[u] \slash \mathbf{I}(\mathbb{Y}'))+\mathrm{reg}(S''[u] \slash \mathbf{I}(\mathbb{Y}''))$.
If we apply inductive hypothesis, our result follows.\end{proof}

\section{Dimension Of Parameterized affine Codes by Odd Cycles}
\label{dim-section}

Let $\mathcal{G}$ be a $k$-cycle and $S=K[t_{1},\ldots,t_{k}]$, suppose that $k=2\gamma+1$. 
Let $F=\{ A \subseteq \{1,\ldots,k\} \mid \left|A \right|=\gamma \}$ and $d \geq 1$. For $H=\{h_{1},\ldots,h_{\gamma}\}\in{F}$, let: 

\begin{center}
$A_{H}(d)=\{ t_{h_{1}}^{a_{h_{1}}} \cdots t_{h_{\gamma}}^{a_{h_{\gamma}}} t_{w_{1}}^{a_{w_{1}}} \cdots t_{w_{\gamma+1}}^{a_{w_{\gamma+1}}} \mid 
\{w_{1},\ldots,w_{\gamma+1}\}=\{1,\ldots,k\}-H,$ $a_{h_{i}}<q-1$ for all $i$, $a_{w_{j}}< \frac{q-1}{2}$ for all $j$ and  
$ \sum_{i=1}^\gamma a_{h_{i}} + \sum_{j=1}^{\gamma+1} a_{w_{j}} \leq d   \}$.
\end{center}

Let $r=\left|F\right|= \binom{k}{\gamma}$ and $F=\{H_{1},\ldots,H_{r}\}$. The Hilbert function of $\mathbf{I}(X_{\mathcal{G}}^{\ast})$ can be 
obtained from the following result. 

\begin{theorem}
\label{dim-theo-one}
Let $\mathcal{G}$ be a $k$-cycle and $S=K[t_{1},\ldots,t_{k}]$, suppose that $k=2\gamma+1$. Using the above notation, we have that: 

\begin{center} 
$H_{X_{\mathcal{G}}^{\ast}}(d)=\left| \displaystyle\bigcup_{i=1}^{r}{A_{H_i}(d) } \right|$. 
\end{center}

\end{theorem}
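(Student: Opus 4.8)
The plan is to show that the footprint of $\mathbf{I}(X_{\mathcal{G}}^{\ast})$, which (as explained in Section~\ref{prelim-codes-graphs}) forms a $K$-basis of $S\slash\mathbf{I}(X_{\mathcal{G}}^{\ast})$, is counted up to degree $d$ exactly by the union $\bigcup_{i=1}^{r}A_{H_i}(d)$. Since $H_{X_{\mathcal{G}}^{\ast}}(d)$ equals the number of standard monomials of degree at most $d$, it suffices to prove that a monomial $t^{a}$ of degree $\le d$ is \emph{standard} with respect to the grlex order if and only if $t^{a}\in\bigcup_{i=1}^{r}A_{H_i}(d)$. The Gr\"obner basis $G=\{t_i^{q-1}-1\}_{i=1}^{k}\cup A$ from Proposition~\ref{grob-basis-prop} is the essential input: the ideal of leading terms $\langle LT(\mathbf{I}(X_{\mathcal{G}}^{\ast}))\rangle$ is generated by $\{t_i^{q-1}\}_{i=1}^{k}$ together with the leading monomials of the elements of $A$, so $t^{a}$ is standard precisely when no $t_i^{q-1}$ divides it and no $LT(f)$ with $f\in A$ divides it.

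First I would record explicitly what those leading terms are. For $f\in A$ of the form $t_{\alpha_1}^{(q-1)/2}\cdots t_{\alpha_\gamma}^{(q-1)/2}-t_{w_1}^{(q-1)/2}\cdots t_{w_{\gamma+1}}^{(q-1)/2}$, grlex compares total degrees first: the first monomial has degree $\gamma\cdot\frac{q-1}{2}$ while the second has degree $(\gamma+1)\cdot\frac{q-1}{2}$, so the leading monomial is always the term supported on the $\gamma+1$ indices, namely $t_{w_1}^{(q-1)/2}\cdots t_{w_{\gamma+1}}^{(q-1)/2}$. Thus $LT(f)$ ranges over all squarefree-in-support products of $\frac{q-1}{2}$-th powers of $\gamma+1$ of the $k$ variables. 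A monomial avoids being divisible by any such $LT(f)$ exactly when it does \emph{not} contain $\gamma+1$ variables each appearing with exponent $\ge\frac{q-1}{2}$, i.e.\ at most $\gamma$ of its variables have exponent $\ge\frac{q-1}{2}$.

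The core of the argument is then to match this combinatorial condition with the defining conditions of $A_{H}(d)$. Given a standard monomial $t^{a}$ of degree $\le d$, let $H=\{h_1,\dots,h_\gamma\}$ be (a choice of) $\gamma$ indices containing all those variables whose exponent is $\ge\frac{q-1}{2}$ — there are at most $\gamma$ such, so this is possible — and let $\{w_1,\dots,w_{\gamma+1}\}$ be the complement; then every $w_j$ has exponent $a_{w_j}<\frac{q-1}{2}$, while every $h_i$ has exponent $a_{h_i}<q-1$ since no $t_i^{q-1}$ divides $t^{a}$, and the degree bound $\sum a_{h_i}+\sum a_{w_j}\le d$ holds, so $t^{a}\in A_H(d)$. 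Conversely, any monomial in some $A_H(d)$ has at most $\gamma$ variables with exponent $\ge\frac{q-1}{2}$ (only the $h_i$ can reach that high, and there are $\gamma$ of them) and all exponents $<q-1$, hence is standard of degree $\le d$. The main obstacle is the overcounting inherent in writing $A_H(d)$ for varying $H$: a single standard monomial may lie in several $A_{H_i}(d)$ because the choice of $H$ absorbing the high-exponent variables is not unique when fewer than $\gamma$ variables are large. This is precisely why the statement uses the \emph{cardinality of the union} $\bigl|\bigcup_{i=1}^{r}A_{H_i}(d)\bigr|$ rather than a sum, and I would emphasize that the union is over all $H\in F$ so that every standard monomial is captured, while taking the cardinality of the union removes the redundancy; the bijection between standard monomials of degree $\le d$ and elements of $\bigcup_i A_{H_i}(d)$ is therefore the identity on monomials, and the equality of counts follows immediately.
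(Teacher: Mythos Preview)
Your proposal is correct and follows essentially the same approach as the paper: both identify the footprint of $\mathbf{I}(X_{\mathcal{G}}^{\ast})$ in degrees $\le d$ with $\bigcup_{i=1}^{r}A_{H_i}(d)$ via the Gr\"obner basis $G$ of Proposition~\ref{grob-basis-prop}. If anything, you are more explicit than the paper---you spell out why the leading monomials of the elements of $A$ are the $(\gamma+1)$-variable products under grlex, and you argue both inclusions in detail, whereas the paper simply asserts ``the other inclusion is clear.''
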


\begin{proof}
Let $A=\{ t_{w_{1}}^{\frac{q-1}{2}} \cdots t_{w_{\gamma+1}}^{\frac{q-1}{2}}-t_{\alpha_{1}}^{\frac{q-1}{2}} \cdots t_{\alpha_{\gamma}}^{\frac{q-1}{2}} \mid 
\{\alpha_{1},\ldots,\alpha_{\gamma}\}\in{F} $ and $ \{w_{1},\ldots,w_{\gamma+1}\}=\{1,\ldots,k\} \setminus \{\alpha_{1},\ldots,\alpha_{\gamma}\} \}$. We 
know that the vanishing ideal of $X_{\mathcal{G}}^{\ast}$ is given by: 

\begin{center}
$\mathbf{I}(X_{\mathcal{G}}^{\ast})= \left\langle  \{t_{i}^{q-1}-1\}_{i=1}^{k} \cup A \right\rangle $. 
\end{center}

Let $G=\{t_{i}^{q-1}-1\}_{i=1}^{k} \cup A$, by Proposition~\ref{grob-basis-prop}, $G$ is a Gr\"obner basis for $\mathbf{I}(X_{\mathcal{G}}^{\ast})$ 
with respect to grlex order. Let $d \geq 1$, we know that $H_{X_{\mathcal{G}}^{\ast}}(d)$ is the number of standard monomials of degree less or equal 
to $d$. Let: 

\begin{center}
$\Delta_{\mathbf{I}(X_{\mathcal{G}}^{\ast})}(d)=\{ m\in{ \Delta_{>_{grlex}}(\mathbf{I}(X_{\mathcal{G}}^{\ast})) } \mid \deg(m) \leq d \}$,
\end{center}

\noindent we are going to prove that: 

\begin{center}
$\Delta_{\mathbf{I}(X_{\mathcal{G}}^{\ast})}(d)=\displaystyle\bigcup_{i=1}^{r}{A_{H_i}(d) }$. 
\end{center}

Let $m\in{\Delta_{\mathbf{I}(X_{\mathcal{G}}^{\ast})}(d)}$, then $m\in{\Delta_{>_{grlex}}(\mathbf{I}(X_{\mathcal{G}}^{\ast}))}$ and 
$\deg(m) \leq d$. As $m\in{\Delta_{>_{grlex}}(\mathbf{I}(X_{\mathcal{G}}^{\ast}))}$, it follows that 
$m \notin{\left\langle LT(\mathbf{I}(X_{\mathcal{G}}^{\ast})) \right\rangle}$. On the other hand $G$ is a Gr\"obner basis, then $m$ is 
not divisible by any leader monomial of any element of $G$. Therefore, there is $H=\{\alpha_{1},\ldots,\alpha_{\gamma}\}\in{F}$ such that:

\begin{center}
$m=t_{\alpha_{1}}^{a_{\alpha_{1}}} \cdots t_{\alpha_{\gamma}}^{a_{\alpha_{\gamma}}}t_{w_{1}}^{a_{w_{1}}} \cdots t_{w_{\gamma+1}}^{a_{w_{\gamma+1}}}$,
\end{center}  

\noindent where $\{w_{1},\ldots,w_{\gamma+1}\}=\{1,\ldots,k\} \setminus H$, $a_{\alpha_{i}} <q-1$ for all $i$ and $a_{w_{j}} <\frac{q-1}{2}$ for 
all $j$. It follows that $m\in{A_{H}(d)}$, thus: 

\begin{center}
$\Delta_{\mathbf{I}(X_{\mathcal{G}}^{\ast})}(d) \subseteq \displaystyle\bigcup_{i=1}^{r}{A_{H_i}(d) }$.
\end{center} 

The other inclusion is clear.\end{proof}

For $d \geq 1$ we define the following sets: 

\medskip

$A_{\gamma}(d)=\{ t_{1}^{a_{1}} \cdots t_{\gamma}^{a_{\gamma}}t_{\gamma+1}^{a_{\gamma+1}} \cdots t_{k}^{a_{k}} \mid 0 \leq a_{i} < q-1$ for all 
$i=1,\ldots, \gamma$,  $0 \leq a_{j} <\frac{q-1}{2}$ for all $j=\gamma+1,\ldots,k$ and $\sum_{i=1}^k a_{i} \leq d\}$,

\medskip

$A_{\gamma-1}(d)=\{ t_{1}^{a_{1}} \cdots t_{\gamma-1}^{a_{\gamma-1}}t_{\gamma}^{a_{\gamma}} \cdots t_{k}^{a_{k}} \mid 0 \leq a_{i} < q-1$ for all 
$i=1,\ldots, \gamma-1$,  $0 \leq a_{j} <\frac{q-1}{2}$ for all $j=\gamma,\ldots,k$ and $\sum_{i=1}^k a_{i} \leq d\}$,

\begin{center}
$\vdots$
\end{center}

$A_{1}(d)=\{ t_{1}^{a_{1}}t_{2}^{a_{2}} \cdots t_{k}^{a_{k}} \mid 0 \leq a_{1} < q-1$,  $0 \leq a_{j} <\frac{q-1}{2}$ for all 
$j=2,\ldots,k$ and $\sum_{i=1}^k a_{i} \leq d\}$,

\medskip

$A_{0}(d)=\{ t_{1}^{a_{1}}\cdots t_{k}^{a_{k}} \mid 0 \leq a_{i} <\frac{q-1}{2}$ for all 
$i=1,\ldots,k$ and $\sum_{i=1}^k a_{i} \leq d\}$.

\begin{remark}
\label{inter-card}
Let $1 \leq l \leq r$ and $1 \leq i_{1} < \cdots < i_{l} \leq r$. It is easy to see that: 

\begin{center}
$\left| A_{H_{i_{1}}}(d) \cap \cdots \cap A_{H_{i_{l}}}(d)  \right|\in{ \{ \left|A_{\gamma}(d)\right|, \ldots, \left|A_{1}(d)\right|, \left| A_{0}(d) \right| \} }$. 
\end{center}

Therefore if we use Theorem~\ref{dim-theo-one}, we can write $H_{X_{\mathcal{G}}^{\ast}}(d)$ as: 

\begin{center}
$H_{X_{\mathcal{G}}^{\ast}}(d)=\beta_{0} \left| A_{0}(d) \right| +\beta_{1}\left|A_{1}(d)\right|+ \cdots +\beta_{\gamma}\left|A_{\gamma}(d)\right|$,
\end{center}

\noindent where $\beta_{0},\ldots,\beta_{\gamma}$ are integers independent of $d$ and $q$. 

\end{remark}

\begin{proposition}
\label{dege-torus-dim}
Let $0 < i \leq \gamma$ and $S=K[t_{1},\ldots,t_{k}]$. Let $X_{i}^{\ast}=\{ (x_{1},\ldots,x_{i},x_{i+1}^{2},\ldots,x_{k}^{2}) \mid x_{j}\in{K^{\ast}}$ for all 
$j \}$ and $X_{0}^{\ast}=\{(x_{1}^{2},\ldots,x_{k}^{2}) \mid x_{i}\in{K^{\ast}}$ for all $i \}$. Then:

\begin{enumerate}

\item[$(\mathrm{i})$] $H_{X_{i}^{\ast}}(d)=\left| A_{i}(d) \right|$.
    
\item[$(\mathrm{ii})$]  $H_{X_{0}^{\ast}}(d)=\left| A_{0}(d) \right|$.  
     
\end{enumerate}

\end{proposition}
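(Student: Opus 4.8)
The plan is to show that each degenerate torus $X_i^\ast$ is itself an affine algebraic toric set parameterized by monomials, compute its vanishing ideal explicitly, produce a Gr\"obner basis, and then count the standard monomials of degree at most $d$, matching them bijectively with the set $A_i(d)$. First I would observe that $X_i^\ast$ is the image of $(K^\ast)^k$ under the map sending $(x_1,\dots,x_k)$ to $(x_1,\dots,x_i,x_{i+1}^2,\dots,x_k^2)$, so it is parameterized by the monomials $y_1,\dots,y_i,y_{i+1}^2,\dots,y_k^2$. Since each coordinate is a power of a single distinct variable, the vanishing ideal decouples variable by variable: for the first $i$ coordinates the relation is $t_j^{q-1}-1$ (the image is all of $K^\ast$), while for the last $k-i$ coordinates the image is $S=\{\beta^2\mid\beta\in K^\ast\}$, which by $|S|=\tfrac{q-1}{2}$ is the set of $\tfrac{q-1}{2}$-th roots of unity, giving the relation $t_j^{(q-1)/2}-1$.

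Concretely I would claim
\begin{center}
$\mathbf{I}(X_i^\ast)=\left\langle \{t_j^{q-1}-1\}_{j=1}^{i}\cup\{t_j^{(q-1)/2}-1\}_{j=i+1}^{k}\right\rangle.$
\end{center}
One inclusion is immediate: each listed binomial vanishes on $X_i^\ast$ because $x_j^{q-1}=1$ for $j\le i$ and $(x_j^2)^{(q-1)/2}=x_j^{q-1}=1$ for $j>i$. For the reverse inclusion, since this generating set consists of univariate polynomials in distinct variables whose leading terms are pairwise coprime, it is automatically a Gr\"obner basis (with respect to grlex, say), and the quotient $S/\mathbf{I}(X_i^\ast)$ is spanned by the standard monomials, namely those $t_1^{a_1}\cdots t_k^{a_k}$ with $a_j<q-1$ for $j\le i$ and $a_j<\tfrac{q-1}{2}$ for $j>i$. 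The number of such monomials is $(q-1)^i\bigl(\tfrac{q-1}{2}\bigr)^{k-i}=|X_i^\ast|$, which forces equality of the ideals because the footprint dimension already equals the number of points. The case $(\mathrm{ii})$ with $X_0^\ast$ is the special instance $i=0$, where all relations are $t_j^{(q-1)/2}-1$.

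Given the Gr\"obner basis, the affine Hilbert function $H_{X_i^\ast}(d)$ equals the number of standard monomials of degree at most $d$, and the standard monomials are exactly the exponent vectors satisfying $a_j<q-1$ for $j=1,\dots,i$ and $a_j<\tfrac{q-1}{2}$ for $j=i+1,\dots,k$ together with $\sum a_j\le d$. Comparing with the definition of $A_i(d)$ (and of $A_0(d)$), these two descriptions coincide term for term, so $H_{X_i^\ast}(d)=|A_i(d)|$ and $H_{X_0^\ast}(d)=|A_0(d)|$. The only genuinely substantive step is verifying that $\{\beta^2\mid\beta\in K^\ast\}$ has exactly $\tfrac{q-1}{2}$ elements and therefore equals the group of $\tfrac{q-1}{2}$-th roots of unity under the hypothesis $2\mid q-1$; this identification of the square set with a multiplicative subgroup is what pins down the exponent bound $\tfrac{q-1}{2}$, and everything else is the routine observation that a product of univariate relations in separate variables gives a monomial basis whose count is the product of the individual footprint sizes. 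I expect the main obstacle, if any, to be purely bookkeeping: making sure the exponent ranges in the footprint match the inequalities defining $A_i(d)$ exactly, with no off-by-one discrepancy between the strict bound $a_j<\tfrac{q-1}{2}$ and the cardinality of the square set.
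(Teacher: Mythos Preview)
Your proof is correct and follows the same outline as the paper: identify $\mathbf{I}(X_i^\ast)$ as generated by the univariate binomials $t_j^{q-1}-1$ for $j\le i$ and $t_j^{(q-1)/2}-1$ for $j>i$, observe that these form a Gr\"obner basis because their leading terms are pairwise coprime, and then count standard monomials of degree at most $d$ to recover $A_i(d)$. The only tactical difference is in the reverse inclusion $\mathbf{I}(X_i^\ast)\subseteq J$: you argue by a dimension count (the footprint of $J$ already has $(q-1)^i\bigl(\tfrac{q-1}{2}\bigr)^{k-i}=|X_i^\ast|$ elements, forcing $J=\mathbf{I}(X_i^\ast)$), whereas the paper divides an arbitrary $f\in\mathbf{I}(X_i^\ast)$ by the proposed generators and invokes the Combinatorial Nullstellensatz on the sets $S_j=K^\ast$ ($j\le i$) and $S_j=\{a^2:a\in K^\ast\}$ ($j>i$) to kill the remainder.
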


\begin{proof}
We are going to prove $(\mathrm{i})$. First, we are going to prove that: 

\begin{center}
$\mathbf{I}(X_{i}^{\ast})=\left\langle t_{1}^{q-1}-1, \ldots, t_{i}^{q-1}-1,t_{i+1}^{\frac{q-1}{2}}-1,\ldots,t_{k}^{\frac{q-1}{2}}-1 \right\rangle$. 
\end{center}

Let $>_{grlex}$ be the grlex order on $S$ and let $f\in{\mathbf{I}(X_{i}^{\ast})}$. By the division algorithm (see \cite[Theorem~3,p. 64]{CLO})
we can write $f$ as: 

\begin{center}
$f=\displaystyle\sum_{j=1}^i h_{j}(t_{j}^{q-1}-1) +\displaystyle\sum_{j=i+1}^k h_{j}(t_{j}^{\frac{q-1}{2}}-1)+G(t_{1},\ldots,t_{k})$,
\end{center} 

\noindent where $h_{j}\in{S}$ for all $j$ and none term of $G$ is divisible by any 
$t_{1}^{q-1}, \ldots, t_{i}^{q-1},t_{i+1}^{\frac{q-1}{2}},\ldots,t_{k}^{\frac{q-1}{2}}$. By Combinatorial-Nullstellensatz, taking 
$S_{j}=K^{\ast}$ for all $j=1,\ldots,i$ and $S_{j}=\{a^{2} \mid a\in{K^{\ast}} \}$ for all $j=i+1,\ldots,k$, we obtain $G=0$. 
Therefore: 

\begin{center}

$\mathbf{I}(X_{i}^{\ast}) \subseteq \left\langle t_{1}^{q-1}-1, \ldots, t_{i}^{q-1}-1,t_{i+1}^{\frac{q-1}{2}}-1,\ldots,t_{k}^{\frac{q-1}{2}}-1 \right\rangle$.

\end{center}

The other inclusion is clear. Let $G=\{t_{1}^{q-1}-1, \ldots, t_{i}^{q-1}-1,t_{i+1}^{\frac{q-1}{2}}-1,\ldots,t_{k}^{\frac{q-1}{2}}-1\}$, by 
\cite[Theorem~6,p. 85]{CLO}, it follows that $G$ is a Gr\"obner basis for $\mathbf{I}(X_{i}^{\ast})$ with respect to grlex order. For $d \geq 1$ 
we know that $H_{X_{i}^{\ast}}(d)$ is the number of standard monomials of degree less or equal to $d$, thus $(\mathrm{i})$ follows.  
The proof of $(\mathrm{ii})$ is similar.\end{proof}

The sets $X_{0}^{\ast},\ldots,X_{\gamma}^{\ast}$ in Proposition~\ref{dege-torus-dim} are degenerate torus (see \cite[Section~4]{cartesian-codes}).
From Remark~\ref{inter-card} and Proposition~\ref{dege-torus-dim} we get that the Hilbert function of $\mathbf{I}(X_{\mathcal{G}}^{\ast})$ can be 
written as: 

\begin{center}
$H_{X_{\mathcal{G}}^{\ast}}(d)=\beta_{0}H_{X_{0}^{\ast}}(d)+\cdots+\beta_{\gamma}H_{X_{\gamma}^{\ast}}(d)$,
\end{center}  

\noindent where $\beta_{0},\ldots,\beta_{\gamma}$ are integers independent of $d$ and $q$. In other words, we can write $H_{X_{\mathcal{G}}^{\ast}}(d)$
as linear combination of Hilbert functions of degenerate torus. For each $i\in{\{0,\ldots,\gamma\}}$ we can find an explicit formula for 
$H_{X_{i}^{\ast}}(d)$ in \cite[Section~4]{cartesian-codes}; therefore if we want to find an explicit formula for $H_{X_{\mathcal{G}}^{\ast}}(d)$, 
we just need to find the values of $\beta_{0},\ldots,\beta_{\gamma}$.  

\begin{example}
Let $K=\mathbb{F}_{5}$ and $S=K[t_{1},t_{2},t_{3},t_{4},t_{5}]$. Let: 

\begin{center}
$X_{\mathcal{G}}^{\ast}=\{(x_{1}x_{2},x_{2}x_{3},x_{3}x_{4},x_{4}x_{5},x_{5}x_{1}) \mid x_{i}\in{\mathbb{F}_{5}^{\ast}} \}$. 
\end{center} 

Let $X_{0}^{\ast},X_{1}^{\ast},X_{2}^{\ast}$ be the sets defined in Proposition~\ref{dege-torus-dim}. Using Macaulay $2.0$, we obtain: 

\begin{eqnarray*}
&&\left.
\begin{array}{c|c|c|c|c}
 d & H_{X_{2}^{\ast}}(d) & H_{X_{1}^{\ast}}(d) & H_{X_{0}^{\ast}}(d) & H_{X_{\mathcal{G}}^{\ast}}(d) \\
   \hline
 1 &  6                  & 6                   & 6                   & 6                             \\ 
   \hline
 2 &  18                 & 17                  &16                   & 21                            \\ 
   \hline
 9 & 128                 & 64                  & 32                  & 512                           \\ 
\end{array}
\right.
\end{eqnarray*}

Note that $\mathrm{reg}(S \slash \mathbf{I}(X_{\mathcal{G}}^{\ast}))=9$. Then we have the following system of 
equations: 

\begin{center}
$6=6\beta_{2}+6\beta_{1}+6\beta_{0}$

\medskip

$21=18\beta_{2}+17\beta_{1}+16\beta_{0}$

\medskip

$512=128\beta_{2}+64\beta_{1}+32\beta_{0}$

\end{center}

Resolving the previous system, we obtain $\beta_{2}=10$, $\beta_{1}=-15$ and $\beta_{0}=6$. Using Macaulay $2.0$ we can verify
that for $d \geq 1$ it follows the following equality: 

\begin{center}  
$H_{X_{\mathcal{G}}^{\ast}}(d)=10H_{X_{2}^{\ast}}(d)-15H_{X_{1}^{\ast}}(d)+6H_{X_{0}^{\ast}}(d)$. 
\end{center}

If we change the field $K$, then the last equality will remain true.

\end{example}

\medskip

\bibliographystyle{plain}

\end{document}